\pgfplotsset{compat=1.16}
\newcommand{\st}{\text{s.t.\xspace}}
\newcommand{\foralltext}{\text{for all\ }}
\newtheorem{theorem}{Theorem}[section]
\newtheorem{assumption}[theorem]{Assumption}
\newtheorem{Corollary}[theorem]{Corollary}
\newtheorem{Lemma}[theorem]{Lemma}
\newtheorem{definition}[theorem]{Definition}
\newtheorem{Remark}[theorem]{Remark}
\DeclareMathOperator{\lcm}{lcm}
\DeclareMathOperator{\diag}{diag}
\newcommand{\testset}[1]{\textsc{#1}}
\title{Exact Decomposition Branching exploiting Lattice Structures}
\newcommand{\myorcidlink}[1]{\,\href{https://orcid.org/#1}{\raisebox{-0.45ex}{\includegraphics[width=1.8ex]{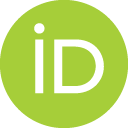}}}}
\author{
	Katrin Halbig\protect\myorcidlink{0000-0002-8730-3447}\footnote{Corresponding author, katrin.halbig@fau.de}
	\and Timm Oertel\protect\myorcidlink{0000-0001-5720-8978}
	\and Dieter Weninger\protect\myorcidlink{0000-0002-1333-8591}
}
\date{{\itshape Friedrich-Alexander-Universität Erlangen-Nürnberg,\\Cauerstr.~11, 91058 Erlangen, Germany}\\\vspace*{2ex}
	October, 2024}
\begin{document}

\maketitle


\begin{abstract}
Strict inequalities in mixed-integer linear optimization can cause difficulties in guaranteeing convergence and exactness.
Utilizing that optimal vertex solutions follow a lattice structure we propose a rounding rule for strict inequalities that guaranties exactness.
The lattice used is generated by $\Delta$-regularity of the constraint matrix belonging to the continuous variables.
We apply this rounding rule to Decomposition Branching by Yıldız et al., which uses strict inequalities in its branching rule.
We prove that the enhanced algorithm terminates after finite many steps with an exact solution.
To validate our approach, we conduct computational experiments for two different models for which $\Delta$-regularity is easily detectable.
The results confirm the exactness of our enhanced algorithm and demonstrate that it typically generates smaller branch-and-bound trees.
\end{abstract}

{\noindent{\bf Keywords:}
Mixed-integer programming $\cdot$
Decomposition $\cdot$
Branch-and-bound$\cdot$
Lattices $\cdot$
Supply chain management}

\section{Introduction}
\label{sect:introduction}

Consider a mixed-integer linear problem of the form
\begin{equation}\label{eq:MIP}
\underset{x,y}{\min} \left\{c^\top x + d^\top y \,\vert\, A x + B y \geq g,\, x \geq 0,\, y \geq 0,\, y \in \mathbb{Z}^{\ell} \right\},
\end{equation}
where $c \in \mathbb{Z}^{n}$, $d \in \mathbb{Z}^{\ell}$, $A \in\mathbb{Z}^{m \times n}$, $B \in\mathbb{Z}^{m \times \ell}$, $g \in\mathbb{Z}^m$.
We assume that problem~\eqref{eq:MIP} has an optimal vertex solution, say $(x^*,y^*)$.
Branch-and-bound~\cite{LandDoig:BranchAndBound}
has become the standard paradigm for solving such problems.
Here branching refers to the way the search space is partitioned,
while bounding refers to the process of using valid bounds
to discard branches and thus restricting the search space without compromising optimality.
While usually branch-and-bound type methods use closed half-spaces specified by linear inequalities to partition the search space
there are less common approaches, such as \emph{Decomposition Branching}~\cite{Yildiz_et_al:2022},
that use open half-spaces specified by strict linear inequalities.
Note that there are also numerous heuristics based on open half-spaces, such as~\cite{Berthold2014,FisMon2014}.
However, strict inequalities come with the problem on how to implement them.
An obvious and typical approach is to reformulate a strict inequality such as for example $u^\top x > \gamma$ (with $u\in\mathbb{Z}^{n}$) by $u^\top x \geq \gamma+\varepsilon$ with a sufficiently small $\varepsilon > 0$ (see, e.g.,~\cite{williams2013model} page 170).
But if $\varepsilon$ is not chosen appropriately the inequality might become invalid for $(x^*,y^*)$.

Our main contribution is a rounding procedure for strict inequalities that only takes the matrix $A$ into account, circumventing the later.
Note that for any feasible solution $(\bar{x},\bar{y})$ we have that $g - B \bar{y}$ is integral.
So it suffices to ensure that the rounding procedure does not cut off any vertices of polyeder $P(A,b)\coloneqq \{x \,\vert\, Ax \geq b,\, x \geq 0\}$ where $b \in \mathbb{Z}^m$.
Naturally the vertices follow a lattice structure which is closely related to $\Delta$-regularity of the matrix $A$.
In Section~\ref{sect:lattices} we show how to exploit this.
In Section~\ref{sect:misl} we show how to determine the $\Delta$-regularity for facility location and lot-sizing problems efficiently.
In Section~\ref{sect:decbranching} we show how the methods developed can be used to
dispense with strict inequalities in Decomposition Branching.
Finally, in Section~\ref{sect:results}, we present computational results.

\section{Rounding Approach based on Lattices}
\label{sect:lattices}

In this section we propose a rounding rule for adding strict inequalities to a
polyhedron $P(A,b) \coloneqq \{x \,\vert\, Ax \geq b,\, x\geq0\}$ with $A \in \mathbb{Z}^{m \times n}$ and $b \in \mathbb{Z}^m$.
Let the symbol $I$ denote the identity matrix of appropriate size.

As is well known, every vertex of $P(A,b)$ corresponds to at least one basis, that is a $(n \times n)$-submatrix $D$ of $\begin{pmatrix}A^\top & I\end{pmatrix}^\top$.
Each regular $(n \times n)$-submatrix $D$
generates a lattice $\Lambda_{D^{-1}} = \{D^{-1} \lambda \,\vert\, \lambda \in \mathbb{Z}^n\}$. 
Define $\Lambda$ as the Minkowski sum of all lattices $\Lambda_{D^{-1}}$, which again is a lattice.
Note that any vertex of $P(A,b)$---independent of $b$---is an element of $\Lambda$.
The main observation we use is that any vector $v$ of the dual lattice $\Lambda^* \coloneqq \{y \in \mathbb{Q}^{n} \,\vert\, x^\top y \in \mathbb{Z}\ \foralltext x \in \Lambda\}$ and any $\gamma \in \mathbb{Z}$ defines a split-cut $S=\{x \,|\, \gamma \le v^\top x \le \gamma + 1 \}$ which---by definition---is a lattice-free set, that is $\operatorname{int}(S) \cap \Lambda =\emptyset$.
For a survey on lattice-free sets, see, for example, \cite{MR1114315}.

Now consider a strict inequality
\begin{equation}\label{Eq:strictIneq}
  u^\top x > \gamma,\quad u\in\mathbb{Z}^n,\gamma\in\mathbb{Q}.
\end{equation}
As $\Lambda$ and $u$ are rational or integer there exits a $\mu > 0$ with $\mu u \in \Lambda^*$.
Noting that $\{ x\in\Lambda \,|\, u^\top x > \gamma \} = \{ x\in\Lambda \,|\, u^\top x \geq \frac{\lfloor \mu \gamma \rfloor + 1}{\mu} \}$
since split-cut $S$ is lattice-free,
we can now replace the strict inequality by
\begin{equation}\label{Ineq:LatticeApproach1}
  u^\top x \geq \frac{\lfloor \mu \gamma \rfloor + 1}{\mu}.
\end{equation}
Inequality \eqref{Ineq:LatticeApproach1}  has the property that
no points that correspond to vertices of $P(A,b)$ and fulfill the strict inequality are cut off. 

In order to simplify the lattice $\Lambda$, to reduce the number of parameters, and to omit solving a minimization problem to determine $\mu$, we want to replace in a next step $\Lambda$ by a superset of the form $\tfrac{1}{\Delta} \mathbb{Z}^n$.
Which leads us to a notion of regularity which has been introduced by \citeauthor{Appa_Kotnyek_2004} in~\cite{Appa_Kotnyek_2004}.
A rational matrix is called \emph{$\Delta$-regular} for $\Delta \in \mathbb{N}$ if for each of its non-singular square submatrices $R$, $\Delta R^{-1}$ is integral.
Let us denote lattice $\Lambda_\Delta \coloneqq \tfrac{1}{\Delta} \mathbb{Z}^n = \{\frac{1}{\Delta}I \lambda \,\vert\, \lambda \in \mathbb{Z}^n\}$.
In \cite[Theorem 17]{Appa_Kotnyek_2004} it is stated that all vertices of a polyhedron $P(A,\Delta b)$
are integral for each integral vector $b$ if and only if $A$ is $\Delta$-regular.
Thus, it is not hard to see that if $A$ is $\Delta$-regular, then $\Lambda\subseteq\Lambda_\Delta$ holds.
Now, it follows that $\Lambda^*\supseteq\Lambda_\Delta^*=\Delta\mathbb{Z}^n$.
Considering again the strict inequality~\eqref{Eq:strictIneq}
it holds that $\Delta u \in \Lambda_\Delta^{*}$
and inequality~\eqref{Ineq:LatticeApproach1} becomes simply
\begin{equation}\label{Ineq:LatticeApproach2}
  u^\top x \geq \frac{\lfloor \Delta \gamma \rfloor + 1}{\Delta}.
\end{equation}

Noting that if a rational matrix is $\Delta$-regular, then this matrix is also $(s\cdot\Delta)$-regular, for any $s\in\mathbb{N}$, we are particularly interested in minimal values and introduce the following definition.
\begin{definition}[Minimal $\Delta$-Regularity]\label{Definition:DeltaRegular:minimal}
  A rational matrix is called \emph{minimal $\Delta$-regular}
  if it is $\Delta$-regular and if there is no $s\in \mathbb{N}$ with $s < \Delta$ such that the matrix is $s$-regular.
\end{definition}

To illustrate the overall approach, consider $P(A,b)=\{x \,\vert\, Ax \geq b,\, x\geq0 \}$ with
  \begin{equation}
    A= 
    \begin{pmatrix}
      -1 & -1 \\ -1 & 1 \\ 0 & -2 
    \end{pmatrix} \quad\text{and}\quad b= \begin{pmatrix} -5 \\ -2 \\ -5 \end{pmatrix}.\nonumber
  \end{equation}
  Matrix $A$ is minimal 2-regular.
  The polyhedron $P(A,b)$ is shown in gray in Figure~\ref{fig:example-rounding}.
  It can be seen that for each vertex $\bar{x}$ of $P(A,b)$ holds the property $\bar{x} \in \Lambda_\Delta = \frac{1}{2} \mathbb{Z}^2$.
  For graphical clarification, the intersection points of the dotted lines are the set $\Lambda_\Delta$.  
\begin{figure}[h]
    \begin{center}
        \pgfplotsset{
			rounding plot/.style = {
			unit vector ratio={1 1},
			axis x line=center,
			axis y line=center,
			xtick={1,2,3,4,5},
			ytick={1,2,3},
			xticklabels={1,2,3,4},
			yticklabels={1,2,3},
			xlabel style={below},
			ylabel style={left},
			xmin=-0.2, xmax=4.5,
			ymin=-0.2, ymax=3.5,
			scale=0.85
			},
		}
		\begin{tikzpicture}
		\begin{axis}[rounding plot,
		xlabel={$x_1$},
		ylabel={$x_2$}
		]
			\draw[dotted, line width = 0.2mm] (0,0.5) -- (4,0.5);
			\draw[dotted, line width = 0.2mm] (0,1) -- (4,1);
			\draw[dotted, line width = 0.2mm] (0,1.5) -- (4,1.5);
			\draw[dotted, line width = 0.2mm] (0,2) -- (4,2);
			\draw[dotted, line width = 0.2mm] (0,2.5) -- (4,2.5);
			\draw[dotted, line width = 0.2mm] (0,3) -- (4,3);

			\draw[dotted, line width = 0.2mm] (0.5,0) -- (0.5,3);
			\draw[dotted, line width = 0.2mm] (1,0) -- (1,3);
			\draw[dotted, line width = 0.2mm] (1.5,0) -- (1.5,3);
			\draw[dotted, line width = 0.2mm] (2,0) -- (2,3);
			\draw[dotted, line width = 0.2mm] (2.5,0) -- (2.5,3);
			\draw[dotted, line width = 0.2mm] (3,0) -- (3,3);
			\draw[dotted, line width = 0.2mm] (3.5,0) -- (3.5,3);
			\draw[dotted, line width = 0.2mm] (4,0) -- (4,3);
			
			\draw[line width = 0.4mm] (0,0) -- (2,0) -- (3.5,1.5) -- (2.5,2.5) -- (0,2.5) -- (0,0);
			\fill[fill=gray!60, fill opacity=0.2] (0,0) -- (2,0) -- (3.5,1.5) -- (2.5,2.5) -- (0,2.5) -- (0,0);
			
			\draw[dashed, line width = 0.2mm] (0,2.275) -- (4,1.275);
			\draw[line width = 0.2mm] (0,2.375) -- (4,1.375);
		\end{axis}
		\end{tikzpicture}
		\caption{Rounding of strict inequalities based on $\Delta$-regularity.}
		\label{fig:example-rounding}
	\end{center}
\end{figure}
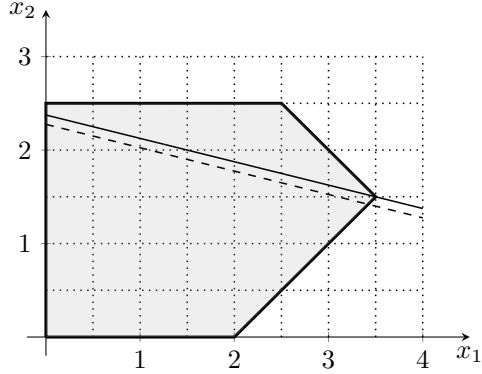

  Now we want to add the strict inequality $x_1 + 4 x_2 > \frac{91}{10}$ (plotted dashed in Figure~\ref{fig:example-rounding}),
  to exclude the half-space $x_1 + 4 x_2 \leq \frac{91}{10}$.
  We can use \eqref{Ineq:LatticeApproach2} with $\Delta=2$
  to obtain the inequality $x_1 + 4 x_2 \geq \frac{19}{2}$,
  which is drawn solid in Figure~\ref{fig:example-rounding}.

\section{Determination of $\Delta$-Regularity}
\label{sect:misl}

In this section, we will deal with the determination of the
minimal $\Delta$-regularity of an integer matrix.
First, we take a look at similar concepts to $\Delta$-regularity and on approaches for determining $\Delta$-regularity from literature.
Then, we will derive upper and lower bounds on the minimal $\Delta$-regularity and, finally,
we will show how the minimal $\Delta$-regularity can be determined as a function of the coefficients for three different models.

$\Delta$-regularity is a generalization of \emph{totally unimodularity}.
A matrix $A$ is totally unimodular if each square submatrix $R$ of $A$ satisfies $\det(R) \in \{0,\pm 1\}$; see~\cite{Schrijver1986} for further details.
It holds that a matrix is $1$-regular if it is totally unimodular.
The set of $1$-regular matrices, however, is larger than that of totally unimodular matrices as a $1$-regular matrix is not necessarily integral.
For example, $A = \begin{pmatrix}\frac{1}{2}\end{pmatrix}$ is a $1$-regular matrix.
However, if $A$ is an integer matrix, we obtain an equivalence statement:
an integral matrix is 1-regular if and only if it is totally unimodular~\cite{Kotnyek2002}.
Based on Seymour's decomposition theorem for regular matroids~\cite{Seymour1980} Truemper~\cite{Truemper1990} developed a highly sophisticated algorithm that decides in polynomial time whether matrix $A$ is totally unimodular with a time complexity of $O((m+n)^3)$ for a $(m \times n)$-matrix.
An implementation of a simplified version of this algorithm was published in~\cite{WalterTruemper2011}.

A similar concept is \emph{totally $\Delta$-modularity}~(see,~e.g.,~\cite{Artmann_etal_2017,Fiorini_etal_2021}).
An integer matrix $A$ is called totally $\Delta$-modular
if for all square submatrices $R$ of $A$ holds $\det(R)\in \{-\Delta,\ldots,\Delta\}$.
If the $\Delta$ is minimal for a matrix $A$, then we call $A$ to be \emph{minimal totally $\Delta$-modular}.
Observe that the minimal totally $\Delta$-modularity can differ significantly from the minimal $\Delta$-regularity.
Consider the two matrices
\[
A_1 = \begin{pmatrix} 5 & 0 & 0 \\
    0 & 5 & 0 \\
    0 & 0 & 1 \end{pmatrix}\quad \text{and}\quad A_2= \begin{pmatrix} 1 & 0 & 0 \\ 0 & 2 & 3 \end{pmatrix}.
\]
Matrix $A_1$ is minimal totally $25$-modular, whereas the matrix is minimal $5$-regular.
In contrast, matrix $A_2$ is minimal totally $3$-modular,
but it is minimal $6$-regular.

A special variant thereof is $\{a, b, c\}$-modularity.
A matrix $A\in\mathbb{Z}^{m \times n}$ is $\{a, b, c\}$-modular
if the set of $(n\times n)$-subdeterminants of
$A$ in absolute value is in $\{a, b, c\}$ with $a,b,c \in\mathbb{Z}$
and $a \geq b \geq c \geq 0$.
In~\cite{Glanzer2022} the authors
succeeded in recognizing $\{a, b, c\}$-modularity in polynomial time
unless $A$ possesses a duplicative relation, that is, $A$ has nonzero $(n \times n)$-subdeterminants $k_1$
and $k_2$ satisfying $2 \vert k_1 \vert = \vert k_2 \vert$.

Focusing again on $\Delta$-regularity, there is an efficient approach for recognizing
so-called \emph{binet matrices}.
A matrix $A$ is called a binet matrix if there exist
both an integral matrix $M$ of full row rank satisfying
$\sum_{j=1}^n \vert M_{ij} \vert \leq 2$ for any column index $j$
and a basis $B$ of it such that $M = \begin{pmatrix}B & N \end{pmatrix}$ (up to column permutation) and $A = B^{-1} N$.
Binet matrices are 2-regular and a generalization of network matrices.
The main result in \cite{Musitelli_2010} is a polynomial-time algorithm for the recognition of binet matrices and thus for the recognition of a subclass of 2-regular matrices.

For an arbitrary matrix $A \in \mathbb{Z}^{m \times n}$, minimal $\Delta$-regularity can be determined by a trivial brute force approach.
One can invert all non-singular square submatrices~$R \in \mathbb{Z}^{w \times w}$
of $A$ with $w \leq \min(m,n)$ and calculate for each $R^{-1}$ the least common multiple of all denominators of the entries of $R^{-1}$ to determine a minimal integer number $\Delta_R$ such that $\Delta_R R^{-1} \in \mathbb{Z}$.
The least common multiple of all $\Delta_R$ is the minimal $\Delta$-regularity of $A$.
However, this can be even more challenging than solving the corresponding optimization problem~\eqref{eq:MIP} by itself.
Whereas there exist algorithms for calculating the matrix inversion in $O(w^3)$,
one has to check up to
$\sum_{w=1}^{\min(m,n)}\binom{m}{w}\binom{n}{w}$
submatrices for determining the minimal $\Delta$-regularity of $A$.

As there is currently no polynomial-time algorithm for the determination of the minimal $\Delta$-regularity of an arbitrary matrix $A$ known so far,
it is of central importance to be able to specify efficiently determinable bounds.
We can state a lower bound on $\Delta$, which follows directly from the definition of $\Delta$-regularity by considering only $(1\times1)$-submatrices:
\begin{Lemma}[Lower Bound Minimal $\Delta$-Regularity]\label{Lemma:lowerbound}
	Let $A$ be an integral matrix.
	Its minimal $\Delta$-regularity is bounded from below by the least common multiple (lcm) of all entries of $A$.
\end{Lemma}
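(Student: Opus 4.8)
The plan is to exploit the defining property of $\Delta$-regularity on the smallest possible submatrices, namely the $(1\times 1)$ ones, exactly as hinted in the statement. Fix an arbitrary $s\in\mathbb{N}$ for which $A$ is $s$-regular, and let $a_{ij}$ be any nonzero entry of $A$. Then $R=(a_{ij})$ is a non-singular $(1\times1)$-submatrix of $A$, so by definition $s R^{-1}=s/a_{ij}$ must be an integer; equivalently, $a_{ij}$ divides $s$. Since this holds for every nonzero entry of $A$, the number $s$ is a common multiple of $|a_{ij}|$ over all entries, hence $\lcm$ of the entries of $A$ divides $s$, and in particular $\lcm(\text{entries of }A)\le s$.

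It then remains only to observe that the minimal $\Delta$-regularity of $A$ is, by Definition~\ref{Definition:DeltaRegular:minimal}, one such admissible value of $s$ (indeed it is the least $s\in\mathbb{N}$ for which $A$ is $s$-regular, and such a value exists since $A$ is integral). Applying the inequality of the previous paragraph to this particular $s$ yields the claimed lower bound.

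I expect no real obstacle here: the argument is a direct consequence of the definition, and in particular one never needs to examine submatrices of size larger than one. The only points requiring a word of care are bookkeeping conventions, which are harmless: $\lcm$ is understood over the absolute values of the entries (divisibility is insensitive to sign), and if $A$ is the zero matrix it has no non-singular submatrices, so it is vacuously $1$-regular while $\lcm$ of the empty set of entries is taken to be $1$, and the bound still holds.
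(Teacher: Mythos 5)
Your argument is correct and is exactly the paper's approach: the paper gives no separate proof but notes that the bound "follows directly from the definition of $\Delta$-regularity by considering only $(1\times1)$-submatrices," which is precisely your divisibility argument, with your remarks on signs and the zero matrix being harmless extra care.
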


Next, we state two upper bounds, where the first is tighter but the second can be calculated more efficiently:
\begin{Lemma}[Upper Bound Minimal $\Delta$-Regularity]\label{Lemma:upperbound}
	Let $A$ be an integral matrix.
 	\begin{enumerate}[label=(\roman*)]
		\item Let $S$ be the set containing the determinants of all non-singular quadratic submatrices of $A$.
		Then, matrix $A$ is $\lcm\{S\}$-regular.
		\item Let $a_i$ be the rows of $A$ and
		\begin{equation}\label{eq:upperbound}
		\Delta^{UB} \coloneqq \lcm \left\{1,\dots, \left\lfloor \prod_{i=1}^m \max\{1, || a_i ||_2\} \right\rfloor\right\}.
		\end{equation}
		Then, matrix $A$ is $\Delta^{UB}$-regular.
		\label{Lemma:upperbound:maxdet}
	\end{enumerate}
	These $\Delta$-regularities of matrix $A$ are per definition an upper bound for its minimal $\Delta$-regularity.
\end{Lemma}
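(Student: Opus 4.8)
The plan is to reduce both parts to one elementary fact, obtained from Cramer's rule, and to add Hadamard's inequality for part~(ii). First I would record the core observation: if $R\in\mathbb{Z}^{w\times w}$ is non-singular and $\Delta\in\mathbb{N}$ is a multiple of $|\det(R)|$, then $\Delta R^{-1}$ is integral. Indeed, Cramer's rule gives $R^{-1}=\tfrac{1}{\det(R)}\operatorname{adj}(R)$, where the adjugate $\operatorname{adj}(R)$ has integer entries because each of them is, up to sign, the determinant of a submatrix of the integral matrix $R$. Hence $\Delta R^{-1}=\tfrac{\Delta}{\det(R)}\operatorname{adj}(R)\in\mathbb{Z}^{w\times w}$ whenever $\det(R)\mid\Delta$. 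Consequently, $A$ is $\Delta$-regular as soon as $\Delta$ is a common multiple of $|\det(R)|$ over all non-singular square submatrices $R$ of $A$.

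For part~(i), the set $S$ consists precisely of the numbers $\det(R)$ as $R$ ranges over all non-singular square submatrices of $A$, so $\lcm\{S\}$ is by construction a common multiple of all these determinants, and the core observation yields that $A$ is $\lcm\{S\}$-regular. The closing assertion of the lemma is then immediate from Definition~\ref{Definition:DeltaRegular:minimal}: the minimal $\Delta$-regularity is the least $\Delta\in\mathbb{N}$ for which $A$ is $\Delta$-regular, so every value for which $A$ is $\Delta$-regular—in particular $\lcm\{S\}$ and $\Delta^{UB}$—is an upper bound for it.

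For part~(ii) it suffices, by the core observation, to show that $|\det(R)|\le\big\lfloor\prod_{i=1}^m\max\{1,\|a_i\|_2\}\big\rfloor$ for every non-singular square submatrix $R$; then $|\det(R)|$ lies in $\{1,\dots,\lfloor\prod_{i=1}^m\max\{1,\|a_i\|_2\}\rfloor\}$ (the value is at least $1$ since $R$ is integral and non-singular), hence divides $\Delta^{UB}$. To establish the determinant bound, let $R$ use the rows of $A$ indexed by $I\subseteq\{1,\dots,m\}$ and some subset $J$ of the columns. For $i\in I$, the corresponding row of $R$ equals $(A_{ij})_{j\in J}$, a subvector of $a_i$, so its Euclidean norm is at most $\|a_i\|_2\le\max\{1,\|a_i\|_2\}$. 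By Hadamard's inequality, $|\det(R)|$ is bounded by the product of its row norms, so $|\det(R)|\le\prod_{i\in I}\max\{1,\|a_i\|_2\}\le\prod_{i=1}^m\max\{1,\|a_i\|_2\}$, the last step being valid because every factor is at least $1$. Finally, since $|\det(R)|$ is an integer, it is at most the floor of the right-hand side.

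None of this is deep; the only place that needs care is part~(ii), where one must correctly handle the $\max\{1,\cdot\}$ and the floor so as to stay inside $\mathbb{N}$ and inside a range that is closed under taking $\lcm$, and must use that passing to a submatrix can only shrink row norms, which legitimizes bounding in terms of the rows of the full matrix $A$. It is also worth noting explicitly that a non-singular integral square matrix has no zero rows, so every row norm entering Hadamard's bound is $\ge 1$—this is exactly what makes the extension of the product from $I$ to all of $\{1,\dots,m\}$ harmless.
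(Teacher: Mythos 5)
Your proof is correct and follows essentially the same route as the paper: the adjugate/Cramer's-rule fact that $\det(R)\,R^{-1}$ is integral gives part (i), and Hadamard's inequality (with row norms of submatrices bounded by those of $A$, extended over all rows via $\max\{1,\cdot\}$ and floored by integrality) gives part (ii). Your write-up is merely more explicit than the paper's about why the determinant bound implies divisibility of $\Delta^{UB}$ and why submatrix rows have smaller norm; no gaps.
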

\begin{proof}
	The first statement follows from the fact that the inverse matrix of each non-singular quadratic submatrix $R$ of $A$ multiplied by $\det(R)$ results in an integer matrix.
	If set $S$ is unknown, a superset can be efficiently determined.
	Let $R$ be a non-singular $(w\times w)$-submatrix of $A$ and $r_i$ its rows with $i=1,\ldots,w$.
	Without loss of generality we assume that we take the first $w$ columns and rows of $A$.
	Using the Hadamard inequality~\cite{Horn_Johnson_1990}, we obtain the following estimate:  
	\begin{equation}
	| \det(R) | \leq \prod_{i=1}^w || r_i ||_2 \leq \prod_{i=1}^w || a_i ||_2 \leq \prod_{i=1}^m \max\{1,|| a_i ||_2\}. \nonumber
	\end{equation}
	So, all submatrices have a determinant in $\left\{1,\dots, \left\lfloor \prod_{i=1}^m \max\{1, || a_i ||_2\} \right\rfloor\right\}$.
	Rounding down is possible since matrix $A$ is integral.
\end{proof}
An example should illustrate Lemma~\ref{Lemma:lowerbound} and Lemma~\ref{Lemma:upperbound}.
	Consider the two matrices
	\begin{equation}
	A_3=
	\begin{pmatrix}
	1 & 1 \\ -1 & 4
	\end{pmatrix} \text{\quad and\quad}
	A_4=
	\begin{pmatrix}
	1 & 1 \\ 0 & 2
	\end{pmatrix}.\nonumber
	\end{equation}
	A lower bound for the minimal $\Delta$-regularity of $A_3$ is given by $\lcm\{1,4\}=4$.
	A tight upper bound is given with $S=\{1,4,5\}$ by $\lcm\{1,4,5\} = 20$
	and another upper bound can be determined by $\lcm\{1,\ldots,\lfloor \sqrt{2}\cdot\sqrt{17} \rfloor\}=\lcm\{1,2,3,4,5\} = 60$.
    For $A_4$, the lower and both upper bounds are the same as $S=\{1,2\}$ and $\lcm\{1,2\}=\lcm\{1,\ldots,\lfloor\sqrt{2}\cdot\sqrt{4}\rfloor\}=2$.

Let us make an addition to Lemma~\ref{Lemma:upperbound}~\ref{Lemma:upperbound:maxdet}.
\begin{Remark}\label{Remark:upperbound}
  If matrix $A\in\mathbb{R}^{m\times n}$ is not square,
  an alternative estimation for the maximal determinant of a $(w \times w)$-submatrix $R$ of $A$ in Lemma~\ref{Lemma:upperbound} is potentially stronger.
  Since matrix transposition preserves $\Delta$-regularity,
  let w.l.o.g $n < m$. Then
  \begin{equation*}
    | \det(R) | \leq \left(\max_{i=1,\ldots,w}\{|| a_i ||_2 \}\right)^{n}
  \end{equation*}
  applies.
  This is probably of special interest if $n$ is much smaller than $m$.
\end{Remark}
We provide an example to illustrate Remark~\ref{Remark:upperbound}.
Consider the matrix
\[
A_5 = \begin{pmatrix}
  -1 & -1 \\ 0 & -2 \\ 0 & 2 \\ -1 & 1 \\ 1 & 0 \\ 1 & 1 \end{pmatrix}
\]
with $m=6$ and $n=2$, which is minimal $2$-regular.
Using Lemma~\ref{Lemma:upperbound}, we obtain the upper bound $\lcm=\{1,\ldots,\lfloor 8\cdot\sqrt{2} \rfloor\}=27720$
and with Remark~\ref{Remark:upperbound} we get the upper bound $\lcm=\{1,\ldots,4\}=12$.

From Lemma~\ref{Lemma:upperbound} follows that we can state
for an arbitrary matrix $A\in \mathbb{Z}^{m\times n}$ a valid $\Delta$-regularity in polynomial time.
It should be noted, however, that this upper bound can be large, making its usefulness questionable in certain situations.

In the following, we will determine a minimal $\Delta$-regularity for three different models.
First we consider a model for the capacitated single-item lot-sizing problem (CLS) (see e.g.~\cite{Pochet2006}).
The variables $s_t$, $x_t$, and $y_t$ represent the inventory at the end of time period $t$,
the production lot-size of period $t$, and an indication if production occurs in period $t$.
The data $h_t$, $p_t$, $q_t$, $d_t$, and $c_t$ represent the unit inventory cost in period $t$, the unit production cost in period $t$,
the fixed cost in period $t$, the demand to be satisfied in period $t$, and the restriction of the production lot-size in period $t$, respectively.
It can be formulated as mixed-integer linear optimization problem as
\begin{subequations}\label{eq:CLS}
  \begin{align}
    \underset{s,x,y}{\min} \quad & \sum_{t=0}^\eta h_t s_t + \sum_{t=1}^\eta p_t x_t + \sum_{t=1}^\eta q_t y_t\\
    \st \quad& s_{t-1} - s_t + x_t = d_t , && \foralltext t=[\eta],\label{eq:CLS:demand}\\
    & x_t \leq c_t y_t, && \foralltext t=[\eta],\label{eq:CLS:capa}\\
    & s \in \mathbb{R}_+^{\eta+1},\, x\in\mathbb{R}_+^{\eta},\, y\in\{0,1\}^{\eta},\label{eq:CLS:variables}\\
    & d\in\mathbb{Z}_+^{\eta},\, c \in \mathbb{Z}_+^{\eta},\label{eq:CLS:integer-data}
  \end{align}
\end{subequations}
where $[\eta] \coloneqq \{1,\dots,\eta\}$. The constraints~\eqref{eq:CLS:demand} are often referred to as flow conservation constraints and 
the constraints~\eqref{eq:CLS:capa} are called capacity constraints.
Constraint~\eqref{eq:CLS:capa} forces the binary variable $y_t$ in period $t$ to be $1$, whenever there is production in period $t$.
In addition to the general modeling of the capacitated single-item lot-sizing problem,
we additionally require the integrality of $d$ and $c$ in \eqref{eq:CLS:integer-data},
in order to get an integer right-hand side by fixing the $y$ variables.
In \cite[Example~6.2]{Wolsey2020} it was shown that the decision problem belonging to problem~\eqref{eq:CLS} is $\mathcal{NP}$-hard.

We want to show how a $\Delta$-regularity can be determined depending on the coefficients of the constraints in problem~\eqref{eq:CLS}.
\begin{theorem}\label{Theorem:CLS:1-regular}
  The matrix belonging to the continuous variables $s$ and $x$ in problem~\eqref{eq:CLS} is $\Delta^{CLS}$-regular,
  where $\Delta^{CLS}=1$.
\end{theorem}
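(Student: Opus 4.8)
The plan is to show that the matrix in question is totally unimodular; since it has integer entries, this is equivalent to being $1$-regular by the characterization cited from~\cite{Kotnyek2002}, and minimality is automatic because $\Delta$-regularity is only defined for $\Delta\in\mathbb{N}$ (this is also consistent with Lemma~\ref{Lemma:lowerbound}, since every nonzero entry of the matrix equals $\pm1$).

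First I would write the matrix out explicitly. Index the continuous variables by $s=(s_0,\dots,s_\eta)$ and $x=(x_1,\dots,x_\eta)$ and the constraints by the flow conservation equations~\eqref{eq:CLS:demand} and the capacity inequalities~\eqref{eq:CLS:capa}. Restricted to the $s$- and $x$-columns, the flow row of period $t$ has entries $+1$ in column $s_{t-1}$, $-1$ in column $s_t$ and $+1$ in column $x_t$; the capacity row of period $t$, once the column of $y_t$ has been dropped, has a single $+1$ in column $x_t$ and is therefore a unit row vector. Hence, up to a permutation of rows and columns, the matrix has the block form $\left(\begin{smallmatrix} N & I \\ 0 & I\end{smallmatrix}\right)$, where $N\in\mathbb{Z}^{\eta\times(\eta+1)}$ collects the $s$-entries of the flow rows and $I$ is the $\eta\times\eta$ identity. (Accounting for the nonnegativity constraints $s\ge0$, $x\ge0$, or writing each equality in~\eqref{eq:CLS:demand} as two inequalities, only introduces further unit rows and row negations, neither of which affects the argument.)

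Next I would peel off the redundant structure using classical stability properties of total unimodularity (see~\cite{Schrijver1986}): adjoining a unit row or a unit column and negating a row all preserve total unimodularity, and every submatrix of a totally unimodular matrix is again totally unimodular. The $\eta$ capacity rows are unit rows, so the whole matrix is totally unimodular if and only if $\left(\begin{smallmatrix} N & I\end{smallmatrix}\right)$ is; and the appended $I$ consists of unit columns, so this in turn is equivalent to $N$ being totally unimodular.

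Finally I would identify $N$: its row of period $t$ has exactly one $+1$ (in column $s_{t-1}$) and one $-1$ (in column $s_t$), so $N$ is precisely the node--arc incidence matrix of the directed path on nodes $0,1,\dots,\eta$ with the arc of period $t$ oriented from node $t-1$ to node $t$. Incidence matrices of directed graphs are totally unimodular~\cite{Schrijver1986}, so $N$, and hence the full matrix, is totally unimodular, which proves $\Delta^{CLS}=1$. I do not expect a genuine obstacle here; the only point needing care is pinning down exactly which rows and columns constitute the matrix belonging to the continuous variables, in particular that deleting the $y_t$-columns collapses the capacity rows to unit vectors, after which everything follows from the structural reduction above. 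A fully self-contained alternative would be to verify the Ghouila--Houri row-partition criterion directly on the full matrix, but that is more laborious and less transparent than the decomposition into a path incidence matrix and identity blocks.
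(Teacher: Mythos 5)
Your proof is correct and takes essentially the same route as the paper: both write the continuous-variable constraint matrix as the path-incidence matrix (your $N$, the paper's $H$) bordered by identity/unit blocks and sign-flipped copies, strip these off by operations that preserve (total uni-)modularity --- the paper via the regularity-preserving lemmas of Appa--Kotnyek, you via the classical TU-preserving operations --- and conclude that the core matrix is totally unimodular, hence the whole matrix is $1$-regular. The only cosmetic difference is how total unimodularity of that core is certified: the paper checks the row-partition criterion (Wolsey, Proposition~3.2), while you cite that node--arc incidence matrices of directed graphs are totally unimodular.
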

\begin{proof}
  Let $(\bar{s}, \bar{x}, \bar{y})$ be an arbitrary feasible solution for problem~\eqref{eq:CLS}.
  With the matrix
  \begin{equation}\label{eq:CFL:matrix:B}
    H = \begin{pmatrix} 1 & -1 & 0 & 0 & 0 & 0 \\
      0 & 1 & -1 & 0 & 0 & 0 \\
      0 & 0 & \ddots & \ddots & 0 & 0 \\
      0 & 0 & 0 & 1 & -1 & 0  \\ 
      0 & 0 & 0 & 0 & 1 & -1 \end{pmatrix} \in \{-1,0,1\}^{\eta \times (\eta+1)}
  \end{equation}
  the constraints of problem~\eqref{eq:CLS} for fixed $\bar{y}$ can also be written as
  \begin{equation}\label{eq:CLS:FixedMatrixVectorForm}
    \underbrace{\begin{pmatrix} H & I_\eta  \\ -H & -I_\eta  \\ 0 & -I_\eta \end{pmatrix}}_{= A \in \mathbb{Z}^{(3\eta)\times (2\eta+1)}} \begin{pmatrix} s \\ x \end{pmatrix} \geq
    \underbrace{\begin{pmatrix} d \\ -d \\ -\diag(c) \bar{y} \end{pmatrix}}_{=g-B\bar{y}\in\mathbb{Z}^{3\eta}}. 
  \end{equation}
  Due to \cite[Lemma 6]{Appa_Kotnyek_2004} and \cite[Lemma 7]{Appa_Kotnyek_2004} it is sufficient to determine the regularity for submatrix
  $H$ in order to obtain the regularity for matrix $A$.
  Let $h_{ij}$ the entry in row $i$ and column $j$ of the matrix $H$.
  We show three properties of $H$.
  First it holds $h_{ij} \in \{-1,0,1\}$,
  second each column $j\in\{1,\ldots,\eta+1\}$ of matrix $H$ contains at most two nonzero coefficients with $\sum_{i=1}^\eta \vert b_{ij}\vert \leq 2$, and
  third we can partition the rows of $H$ by $M_1 \coloneqq \{1,\ldots,\eta\}$ and $M_2 \coloneqq \emptyset$ such that $\sum_{i \in M_1} b_{ij} - \sum_{i\in M_2}b_{ij} = 0$
  for each column $j$ containing two nonzero coefficients.
  With \cite[Proposition~3.2]{Wolsey2020} follows that $H$ is totally unimodular or in other words $1$-regular.
  This shows that the matrix $A$ is $1$-regular, that is, $\Delta^{CLS}=1$.
\end{proof}

A similar procedure as with Theorem~\ref{Theorem:CLS:1-regular}
for the capacitated single-item lot-sizing problem can also be applied to other problems as for example the
capacitated fixed charge network flow problem (see,~e.g.,~\cite[p.~58]{Wolsey2020}).

Next, we consider the multi-item single-level lot-sizing problem with joint resource constraints (MISL), see \cite[p.~384]{Pochet2006}.
To represent the model, we use the same identifiers as for problem~\eqref{eq:CLS}.
The superscript $i$ denotes the assignment to item $i$.
The coefficients $a^i$ and $b^i$ represent the amount of capacity of the resource $r_t$ consumed per unit
of item $i$ produced in period $t$, and for a set-up of item $i$ in period $t$, respectively.
It can be formulated as mixed-integer linear optimization problem as
\begin{subequations}\label{eq:MISL}
	\begin{align}
	\underset{s,x,y}{\min} \quad & \sum_{i=1}^\mu\sum_{t=0}^\eta h_t^i s_t^i + \sum_{i=1}^\mu \sum_{t=1}^\eta p_t^i x_t^i + \sum_{i=1}^\mu \sum_{t=1}^\eta q_t^i y_t^i \span\span\\
	\st \quad& s_{t-1}^i - s_t^i + x_t^i = d_t^i, && \foralltext i \in [\mu],\,t \in [\eta],\label{eq:MISL:store}\\
	& x_t^i \leq c_t^i y_t^i, && \foralltext i \in [\mu],\,t \in [\eta],\label{eq:MISL:capa}\\
	& \sum_{i=1}^\mu a^i x_t^i + \sum_{i=1}^\mu b^i y_t^i \leq r_t, && \foralltext t\in [\eta],\label{eq:MISL:rescapa}\\
	& s\in\mathbb{R}_+^{\mu\eta+\mu},\, x\in\mathbb{R}_+^{\mu\eta},\, y\in\{0,1\}^{\mu\eta},\label{eq:MISL:variables}\\
        & d \in \mathbb{Z}_+^{\mu\eta},\, c \in \mathbb{Z}_+^{\mu\eta},\, a\in\mathbb{Z}_+^{\mu},\, b\in\mathbb{Z}_+^{\mu},\, r\in\mathbb{Z}_+^\eta.\span\span\label{eq:MISL:integer-data}
	\end{align}
\end{subequations}
The model~\eqref{eq:MISL} shows that $\mu$ capacitated single-item lot-sizing problems~\eqref{eq:CLS}
are coupled via common resource constraints~\eqref{eq:MISL:rescapa}.
For the same reason as for problem~\eqref{eq:CLS}, we require integrality for some input data of the model in \eqref{eq:MISL:integer-data}.
Since problem~\eqref{eq:CLS} is a subproblem of \eqref{eq:MISL},
it follows that the corresponding decision problem to \eqref{eq:MISL} is also a $\mathcal{NP}$-hard problem.

\begin{theorem}\label{Theorem:MISL:Delta-regular}
  The matrix belonging to the continuous variables $s$ and $x$ in problem~\eqref{eq:MISL} is $\Delta^{MISL}$-regular,
  where $\Delta^{MISL}$ is the least common multiple of coefficients $a^i$ for all $i\in[\mu]$.
\end{theorem}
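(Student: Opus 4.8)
The plan is to follow the blueprint of the proof of Theorem~\ref{Theorem:CLS:1-regular}. Fix a feasible integral vector $\bar y$ and write down the constraint matrix acting on the continuous block $(s,x)$ of problem~\eqref{eq:MISL}. Since the flow conservation constraints~\eqref{eq:MISL:store} decouple over the $\mu$ items, with $H$ as in~\eqref{eq:CFL:matrix:B} the flow part is the block-diagonal matrix $\bar H\coloneqq\diag(H,\dots,H)$; the capacity constraints~\eqref{eq:MISL:capa} contribute a $-I$ block on the $x$-variables; and the joint resource constraints~\eqref{eq:MISL:rescapa} contribute the rows of a matrix $\bar A_r\in\mathbb{Z}^{\eta\times\mu\eta}$ whose entry in the row of period $t$ and the column of $x_t^i$ equals $a^i$, all other entries being $0$. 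For fixed $\bar y$ the system then has the form
\[
\begin{pmatrix} \bar H & I \\ -\bar H & -I \\ 0 & -I \\ 0 & -\bar A_r \\ I & 0 \\ 0 & I\end{pmatrix}\begin{pmatrix} s \\ x\end{pmatrix}\ \geq\ b
\]
with $b$ integral by~\eqref{eq:MISL:integer-data}. Exactly as in the proof of Theorem~\ref{Theorem:CLS:1-regular}, discarding the negated row block and the (possibly negated) unit-vector row blocks via \cite[Lemma~6 and Lemma~7]{Appa_Kotnyek_2004} reduces the claim to showing that the core matrix $C\coloneqq\begin{pmatrix}\bar H & I\\ 0 & -\bar A_r\end{pmatrix}$ is $\Delta^{MISL}$-regular.

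The key idea is a two-sided diagonal rescaling that absorbs the coefficients $a^i$. Let $E$ be the diagonal matrix that scales every column belonging to the variables $s^i$ or $x^i$ by $a^i$, and let $R$ be the diagonal matrix that scales every flow row of item $i$ by $a^i$ while leaving the resource rows unchanged; set $\hat C\coloneqq R\,C\,E^{-1}$. A column of $C$ belonging to $s^i$ or $x^i$ has nonzeros only in flow rows of item $i$, plus, for a column $x_t^i$, in the period-$t$ resource row, where its coefficient is $a^i$; in each case the row- and column-scalings cancel, so $\hat C$ is an integral matrix with entries in $\{-1,0,1\}$, and every column of $\hat C$ has at most two nonzero entries, which are then a $+1$ and a $-1$. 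Arguing as in the proof of Theorem~\ref{Theorem:CLS:1-regular}, with $M_1$ the full set of rows and $M_2=\emptyset$, \cite[Proposition~3.2]{Wolsey2020} yields that $\hat C$ is totally unimodular, hence $1$-regular.

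Finally I would carry this back to $C$. For any non-singular square submatrix $C[I',J']$ (rows indexed by $I'$, columns by $J'$) we have $C[I',J']=R[I',I']^{-1}\,\hat C[I',J']\,E[J',J']$ with both diagonal factors invertible, so $\hat C[I',J']$ is non-singular and $\hat C[I',J']^{-1}$ is integral; hence
\[
C[I',J']^{-1}=E[J',J']^{-1}\,\hat C[I',J']^{-1}\,R[I',I'],
\]
and every entry of the right-hand side is an integer times a reciprocal $1/a^i$ (from the diagonal of $E[J',J']^{-1}$) times an integer (from the diagonal of $R[I',I']$). Multiplying by $\Delta^{MISL}=\lcm\{a^1,\dots,a^\mu\}$ clears every denominator, so $\Delta^{MISL}\,C[I',J']^{-1}\in\mathbb{Z}$; thus $C$, and therefore the matrix belonging to $s$ and $x$, is $\Delta^{MISL}$-regular. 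Since the $a^i$ occur as entries of that matrix, Lemma~\ref{Lemma:lowerbound} additionally shows that $\Delta^{MISL}$ is in fact its minimal $\Delta$-regularity.

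I expect the delicate point to be the rescaling step: one must verify that $R\,C\,E^{-1}$ stays integral and collapses to a matrix of the type covered by \cite[Proposition~3.2]{Wolsey2020} — which works precisely because the resource rows are the only place where distinct items interact and, once an $x_t^i$-column is divided by $a^i$, those rows become plain $0/{\pm1}$ rows — and then one has to track carefully how the two one-sided diagonal scalings act on the inverse of an arbitrary non-singular square submatrix, so that exactly the factor $\lcm\{a^i\}$ is what is required.
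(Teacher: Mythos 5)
Your proposal is correct and follows essentially the same route as the paper: fix $\bar y$, reduce via \cite[Lemmas 6 and 7]{Appa_Kotnyek_2004} to the core matrix consisting of the block-diagonal flow rows, the coupling identity blocks and the resource rows, establish total unimodularity of its unit-coefficient version via \cite[Proposition~3.2]{Wolsey2020}, and then absorb the coefficients $a^i$ by row/column scalings. The only difference is in that last step, where the paper cites \cite[Lemma 14]{Appa_Kotnyek_2004} together with the row/column-division lemma, while you carry out the same scaling explicitly through the identity $C=R^{-1}\hat C E$ and a direct computation of the inverses of nonsingular square submatrices, which is a correct, self-contained substitute for those citations (and your closing observation that Lemma~\ref{Lemma:lowerbound} yields minimality matches the paper's subsequent corollary).
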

\begin{proof}
  Let $(\bar{s}, \bar{x}, \bar{y})$ be an arbitrary feasible solution for problem~\eqref{eq:MISL}.
  With the matrix $H$ as defined in equation~\eqref{eq:CFL:matrix:B}
  the constraints of problem~\eqref{eq:MISL} for fixed $\bar{y}$ can also be written as
  \begin{equation*}
    \underbrace{\begin{pmatrix} H & 0 & 0 & I_\eta & 0 & 0 \\
        0 & \ddots & 0 & 0 & \ddots & 0 \\
        0 & 0 & H & 0 & 0 & I_\eta \\
        -H & 0 & 0 & -I_\eta & 0 & 0 \\
        0 & \ddots & 0 & 0 & \ddots & 0 \\
        0 & 0 & -H & 0 & 0 & -I_\eta \\
        0 & 0 & 0 & -I_\eta & 0 & 0 & \\
        0 & 0 & 0 & 0 & \ddots & 0 \\
        0 & 0 & 0 & 0 & 0 & -I_\eta \\
        0 & 0 & 0 & -a^1 I_\eta & \cdots  & -a^\mu I_\eta \\
    \end{pmatrix}}_{= A \in \mathbb{Z}^{(3\mu\eta+\eta)\times (2\mu\eta+\mu)}} \begin{pmatrix} s^1 \\ \vdots \\ s^\mu \\ x^1 \\ \vdots \\ x^\mu \end{pmatrix} \geq
    \underbrace{\begin{pmatrix} d^1 \\ \vdots \\ d^\mu \\ -d^1 \\ \vdots \\ -d^\mu \\ -\diag(c^1) (\bar{y}^{1}) \\ \vdots \\ -\diag(c^\mu) (\bar{y}^{\mu}) \\ \sum_{i=1}^\mu(b^i I_\eta (\bar{y}^i))-r \\
    \end{pmatrix}}_{=g-B\bar{y}\in\mathbb{Z}^{3\mu\eta+\eta}}. 
  \end{equation*}
  Due to \cite[Lemma 6]{Appa_Kotnyek_2004} and \cite[Lemma 7]{Appa_Kotnyek_2004} it is sufficient to determine the regularity for submatrix
  \begin{equation}
    M \coloneqq \left(
    \begin{matrix}
      H & 0 & 0	& I_\eta & 0 & 0 \\
      0	& \ddots & 0 & 0 & \ddots & 0\\
      0 & 0 & H & 0 & 0 & I_\eta \\[5pt]
      0 & 0 & 0 & a^1 I_\eta & \cdots & a^\mu I_\eta \\
    \end{matrix} \right)
  \end{equation}
  of $A$ in order to obtain the regularity for matrix $A$.
  Note that we get the matrix $M$ by taking the rows 1-3 and 10 from the matrix $A$.

  If all coefficients $a^i$ are 1, matrix $M$ is totally unimodular~\cite[Proposition~3.2]{Wolsey2020}
  and thus matrix $M$ is 1-regular.
  Due to \cite[Lemma 14]{Appa_Kotnyek_2004}, this matrix multiplied with $\lcm(a^1,\dots,a^\mu)$ is $\lcm(a^1,\dots,a^\mu)$-regular.
  Due to \cite[Lemma 6]{Appa_Kotnyek_2004} dividing a row or column by a nonzero integer preserves the regularity.
  So, for each item, dividing all columns corresponding to this item by the integer value $\frac{\lcm(a^1,\dots,a^\mu)}{a^i}$,
  and then dividing all rows corresponding to constraints~\eqref{eq:MISL:store} of this item by $a^i$
  leads to the original matrix $M$ and preserves the $\lcm(a^1,\dots,a^\mu)$-regularity.
  Please note that the assumption of $a^i \not= 0$ for all items $i$ makes sense
  in order to avoid division by $0$, because otherwise the model would be trivially decomposable.
  So, $\Delta^{MISL} = \lcm(a^1,\dots,a^\mu)$ holds.\qedhere
\end{proof}

As last example for determining $\Delta$-regularity,
we consider the capacitated facility location problem (CFL) (see e.g.~\cite{Wolsey2020}).
The capacitated facility location problem is to find locations for new facilities
such that the conveying cost from facilities to customers is minimized.
Specifically there are $\eta$ potential facilities and $\mu$ clients.
Facility $j$ has a capacitiy $r_j$ and client $i$ has a demand $a^i$.
There is a fixed cost $c_j$ of using facility $j$ and $h^i_j$ is the cost of transporting $a^i$ units from facility $j$ to client $i$.
The goal is to minimize the total cost while satisfying the demands subject to the capacity constraints.
Letting $y_j=1$ if facility $j$ is opened and $0$ otherwise and $x^i_j$ denote the fraction
of the demand of client $i$ satisfied from facility $j$.
It can be formulated as mixed-integer linear optimization problem as
\begin{subequations}\label{eq:CFL}
	\begin{align}
	\underset{x,y}{\min} \quad & \sum_{j=1}^\eta c_j y_j + \sum_{i=1}^\mu \sum_{j=1}^\eta h^i_j x^i_j\\
	\st \quad& \sum_{j=1}^\eta x^i_j = 1, && \foralltext i=[\mu],\label{eq:CFL:demand}\\
	& \sum_{i=1}^\mu a^i x^i_j \leq r_j y_j, && \foralltext j=[\eta],\label{eq:CFL:capa}\\
        & \sum_{j=1}^\eta r_j y_j \geq \sum_{i=1}^\mu a^i, \label{eq:CFL:globalCut}\\
	& x\in\mathbb{R}_+^{\mu\eta},\, y\in\{0,1\}^{\eta},\label{eq:CFL:variables}\\
        & a \in \mathbb{Z}_+^{\mu},\, r \in \mathbb{Z}_+^{\eta}.\label{eq:CFL:integer-data}
	\end{align}
\end{subequations}
The constraints~\eqref{eq:CFL:demand} ensure that the demand of each client is met and
the constraints~\eqref{eq:CFL:capa} ensure that the capacity of facility $j$ is not exceeded.
The inequality \eqref{eq:CFL:globalCut} enforces that sufficient capacity is available to meet the entire demand.
For the same reason as for problem~\eqref{eq:CLS}, we require integrality for some input data of the model in \eqref{eq:CFL:integer-data}.
It is known that the decision problem for \eqref{eq:CFL} is $\mathcal{NP}$-hard~\cite{Buesing_etal_2022}.

\begin{theorem}\label{Theorem:CFL:Delta-regular}
  The matrix belonging to the continuous variables $x$ in problem~\eqref{eq:CFL} is $\Delta^{CFL}$-regular,
  where $\Delta^{CFL}$ is the least common multiple of coefficients $a^i$ for all $i\in[\mu]$.
\end{theorem}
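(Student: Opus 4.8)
The plan is to follow the proof of Theorem~\ref{Theorem:MISL:Delta-regular} almost verbatim. Fix an arbitrary feasible solution $(\bar x,\bar y)$ of~\eqref{eq:CFL}. Let $E\in\{0,1\}^{\mu\times\mu\eta}$ be the coefficient matrix of the demand constraints~\eqref{eq:CFL:demand} (row $i$ carries a $1$ in exactly the columns indexed by $x^i_1,\dots,x^i_\eta$), and let $\tilde A\in\mathbb{Z}_{+}^{\eta\times\mu\eta}$ be the coefficient matrix of the capacity constraints~\eqref{eq:CFL:capa} (row $j$ carries the entry $a^i$ in the column indexed by $x^i_j$ and zeros elsewhere). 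Fixing $\bar y$, the constraints of~\eqref{eq:CFL} on the continuous block $x$ read $Ax\ge g-B\bar y$, where $A$ is assembled from $E$, $-E$ (the two directions of the equality), $-\tilde A$, a zero row coming from~\eqref{eq:CFL:globalCut}, and $\pm$ unit rows coming from $x\ge0$. By \cite[Lemma~6]{Appa_Kotnyek_2004} and \cite[Lemma~7]{Appa_Kotnyek_2004}---discarding the zero row, the unit rows and sign-flipped copies of rows, exactly as when passing from $A$ to $M$ in the proof of Theorem~\ref{Theorem:MISL:Delta-regular}---it suffices to bound the $\Delta$-regularity of the core matrix $M\coloneqq\begin{pmatrix}E \\ \tilde A\end{pmatrix}$.

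I would first treat the unit-demand case $a^1=\dots=a^\mu=1$. Then $\tilde A$ specializes to the $0/1$ matrix $E'$ whose row $j$ carries a $1$ in exactly the columns indexed by $x^1_j,\dots,x^\mu_j$, and I would verify the three hypotheses of \cite[Proposition~3.2]{Wolsey2020} for $M_0\coloneqq\begin{pmatrix}E \\ E'\end{pmatrix}$: all entries lie in $\{-1,0,1\}$; every column has exactly two nonzeros, one in the $E$-block (the row of its client) and one in the $E'$-block (the row of its facility); and, choosing $M_1$ to be the rows of $E$ and $M_2$ the rows of $E'$, for each column the $M_1$-column-sum minus the $M_2$-column-sum equals $1-1=0$. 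Hence $M_0$ is totally unimodular, i.e.\ $1$-regular. (Concretely, $M_0$ is the node--edge incidence matrix of the complete bipartite graph on $\mu+\eta$ nodes.)

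To lift this to general $a^i$, set $L\coloneqq\lcm(a^1,\dots,a^\mu)$ and $\ell_i\coloneqq L/a^i\in\mathbb{N}$; as in Theorem~\ref{Theorem:MISL:Delta-regular} we may assume $a^i\ne0$, since a client with zero demand can be disregarded. By \cite[Lemma~14]{Appa_Kotnyek_2004}, $L\cdot M_0$ is $L$-regular. Now apply \cite[Lemma~6]{Appa_Kotnyek_2004} twice: for each client $i$, dividing the $\eta$ columns indexed by $x^i_1,\dots,x^i_\eta$ by $\ell_i$ turns both the $E$-block and the $E'$-block entry of those columns into $a^i$, so the lower block becomes exactly $\tilde A$; then dividing row $i$ of the upper block by $a^i$ restores that block to $E$ without altering $\tilde A$. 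The matrix so obtained is precisely $M$, and every step preserves $L$-regularity, hence $M$---and therefore $A$---is $\lcm(a^1,\dots,a^\mu)$-regular, i.e.\ $\Delta^{CFL}=\lcm(a^1,\dots,a^\mu)$.

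The steps that actually require care are the verification of the three conditions of \cite[Proposition~3.2]{Wolsey2020} in the unit-demand case and getting the order and targets of the column/row divisions in the last paragraph exactly right, so that the outcome is literally the matrix $M$ rather than merely one with the same support; since this is the same bookkeeping as in the proof of Theorem~\ref{Theorem:MISL:Delta-regular}, I expect no genuine obstacle.
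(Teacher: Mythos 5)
Your proposal is correct and follows essentially the same route as the paper, whose proof of this theorem simply defers to the argument for Theorem~\ref{Theorem:MISL:Delta-regular}: reduce via \cite[Lemma~6]{Appa_Kotnyek_2004} and \cite[Lemma~7]{Appa_Kotnyek_2004} to the core matrix, establish total unimodularity in the unit-coefficient case via \cite[Proposition~3.2]{Wolsey2020}, and lift to general $a^i$ using \cite[Lemma~14]{Appa_Kotnyek_2004} together with row/column divisions. Your explicit bookkeeping (including the bipartite-incidence observation) is a faithful instantiation of that argument for the CFL structure, with no gaps.
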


\begin{proof}
	This can be proven with the same arguments as for Theorem~\ref{Theorem:MISL:Delta-regular}.
\end{proof}

An extension of the capacitated facility location problem is to require single-sourcing for a subset of clients~\cite{WeningerWolsey2023},
that is, there are clients that may only be supplied by one facility.
For this problem extension, however, $\Delta^{CFL}$ from Theorem~\ref{Theorem:CFL:Delta-regular} remains valid because the matrix to be considered is a submatrix~\cite[Lemma 6]{Appa_Kotnyek_2004}.

Following from Lemma~\ref{Lemma:lowerbound}, which gives a lower bound, and the Theorems~\ref{Theorem:MISL:Delta-regular} and~\ref{Theorem:CFL:Delta-regular}, which give upper bounds, we can state:
\begin{Corollary}
	$\Delta^{MISL}$ and $\Delta^{CFL}$ are minimal $\Delta$-regularities.
\end{Corollary}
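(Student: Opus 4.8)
The plan is to sandwich each of $\Delta^{MISL}$ and $\Delta^{CFL}$ between a matching upper and lower bound. The upper bounds are already in hand: Theorems~\ref{Theorem:MISL:Delta-regular} and~\ref{Theorem:CFL:Delta-regular} state that the relevant coefficient matrix is $\Delta^{MISL}$-regular, respectively $\Delta^{CFL}$-regular, so by Definition~\ref{Definition:DeltaRegular:minimal} its minimal $\Delta$-regularity is at most $\Delta^{MISL}$, respectively $\Delta^{CFL}$. It therefore only remains to certify that these values are also lower bounds, which I would do by a direct appeal to Lemma~\ref{Lemma:lowerbound}.

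For the MISL case I would inspect the matrix $A$ written out in the proof of Theorem~\ref{Theorem:MISL:Delta-regular}. Its only nonzero entries are $\pm 1$, coming from the blocks $H$, $I_\eta$ and $-I_\eta$, together with $-a^1,\dots,-a^\mu$, coming from the bottom block $-a^1 I_\eta\ \cdots\ -a^\mu I_\eta$; the remaining data $b^i$, $c^i_t$ and $r_t$ enter only the right-hand side $g-B\bar y$ and not $A$. Hence the least common multiple of all nonzero entries of $A$ is $\lcm(1,a^1,\dots,a^\mu)=\lcm(a^1,\dots,a^\mu)=\Delta^{MISL}$, and Lemma~\ref{Lemma:lowerbound} gives that the minimal $\Delta$-regularity of $A$ is at least $\Delta^{MISL}$. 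Together with the upper bound this yields equality. The CFL case is handled the same way: the coefficient matrix attached to the $x$-variables has nonzero entries $\pm 1$ (from the demand equations~\eqref{eq:CFL:demand} split into inequalities and from the nonnegativity constraints) and $a^1,\dots,a^\mu$ (from the capacity constraints~\eqref{eq:CFL:capa}), while~\eqref{eq:CFL:globalCut} contributes nothing in the $x$-columns; so again the lcm of its entries equals $\Delta^{CFL}$, and Lemma~\ref{Lemma:lowerbound} closes the argument.

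I do not expect a genuine obstacle here; the only point that needs a little care is bookkeeping, namely verifying that Lemma~\ref{Lemma:lowerbound} is applied to exactly the matrix the two theorems speak about and that no coefficient outside $\{\pm 1,a^1,\dots,a^\mu\}$ enters that matrix — in particular that splitting an equality into two inequalities and appending the nonnegativity rows introduces only $\pm 1$ entries, and that the resource and set-up data ($b^i$, $r_t$, $c^i_t$) sit in the right-hand side. Using that $a^i\ge 1$ for all $i$ (as assumed in the two theorems), the lower bound from Lemma~\ref{Lemma:lowerbound} then meets the upper bounds from Theorems~\ref{Theorem:MISL:Delta-regular} and~\ref{Theorem:CFL:Delta-regular} exactly, which is precisely minimality in the sense of Definition~\ref{Definition:DeltaRegular:minimal}.
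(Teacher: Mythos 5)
Your proposal is correct and coincides with the paper's own argument: the paper also obtains the result by sandwiching, citing Lemma~\ref{Lemma:lowerbound} (the lcm of the entries of the fixed-$\bar y$ constraint matrix, which are $\pm 1$ and the $a^i$, gives the lower bound) together with Theorems~\ref{Theorem:MISL:Delta-regular} and~\ref{Theorem:CFL:Delta-regular} as the matching upper bounds. Your additional bookkeeping about which coefficients land in the matrix versus the right-hand side is exactly the detail that makes the lower bound evaluate to $\lcm(a^1,\dots,a^\mu)$, so nothing is missing.
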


\section{Enhancement of Decomposition Branching}
\label{sect:decbranching}

We apply our ideas on the \emph{Decomposition Branching} (DB) approach, introduced in~\cite{Yildiz_et_al:2022}.
The branching scheme of Decomposition Branching is quite different to branching schemes based on single-variable dichotomy as
it divides the search space by exploiting \emph{decomposition information} (or for short \emph{decomposition}) of the problem.

A decomposition with $k$ blocks for a matrix is a partition of the rows and columns into $k + 1$ pieces each such that rearranging the matrix according to the decomposition yields a block structure with $k$ blocks on the diagonal and so-called \emph{linking rows} and/or \emph{linking columns}.
For our application we only consider linking rows and no linking columns, thus matrix $A$ can be rewritten as
\begin{align*}
A =
	\left(
	\begin{matrix}
		A_{1} &	0 &	0\\
		0 &	\ddots & 0 \\
		0 & 0 & A_{k} \\
		{A}^{\text{row}}_1 & \cdots & {A}^{\text{row}}_k
	\end{matrix}
	\right).
\end{align*}
For each block $q \in [k]$,
the matrix $A_q$ is a submatrix of $A$ containing exactly the rows and columns of block $q$.
The last rows are the linking rows, where $A_q^\text{row}$, $q\in[k]$, are the submatrices containing exactly the columns of block $q$ and only linking rows.
Restrictions of the matrix $B$ and the vectors $c$, $d$, $g$, $x$, and $y$ of problem~\eqref{eq:MIP} are defined analogously.
A more detailed description of decompositions can be found, for example, in~\cite{Halbig_DecHeur_2023}.

\subsection{Properties of the initial Algorithm}

At first, we present the basic concept of Decomposition Branching following the presentation in~\cite{Yildiz_et_al:2022}.
Assuming, matrix $\begin{pmatrix}A & B \end{pmatrix}$ of problem~\eqref{eq:MIP} has a block structure with $k$ blocks and $m^\text{row}$ linking rows.
It is then possible to rewrite problem~\eqref{eq:MIP} as 
\begin{subequations}\label{eq:DB:orig}
	\begin{align}
	\underset{x,y}{\min} \quad & \sum_{q\in[k]}c_q^\top x_q + d_q^\top y_q\\
	\st \quad& A_q x_q + B_q y_q \geq g_q, &&\foralltext q \in [k],\\
	& \sum_{q\in[k]}A_q^\text{row}x_q + \sum_{q\in[k]}B_q^\text{row}y_q \geq g^\text{row}, \label{eq:DB:orig:linking}\\
	& x \geq 0,\ y\geq 0,\ y \in \mathbb{Z}^{\ell}.
	\end{align}
\end{subequations}

The so-called \emph{linking constraints}~\eqref{eq:DB:orig:linking} can be further divided into the parts of the single blocks by means of additional variables $p_q \in \mathbb{R}^{m^\text{row}}$ for each block $q \in [k]$,
which describe how the right-hand side vector $g^{\text{row}}$ is partitioned between the blocks.
Such a reformulation with auxiliary variables is also utilized, for example, in~\cite{BodurABN22}.
Thus, problem~\eqref{eq:DB:orig} can be reformulated as
\begin{subequations}\label{eq:DB:reform}
	\begin{align}
	\underset{x,y,p}{\min} \quad & \sum_{q\in[k]}c_q^\top x_q + d_q^\top y_q\\
	\st \quad& A_q x_q + B_q y_q \geq g_q, &&\foralltext q \in [k],\\
	& A_q^{\text{row}}x_q + B_q^{\text{row}}y_q \geq p_q, && \foralltext q \in [k], \\
	& \sum_{q\in[k]} p_q \geq g^{\text{row}},\\
	& x \geq 0,\ y\geq 0,\ y \in \mathbb{Z}^{\ell}.
	\end{align}
\end{subequations}

Decomposition Branching solves problem~\eqref{eq:DB:orig} by a branch-and-bound approach using the following branching rule.
At some node of the branch-and-bound tree the LP relaxation of problem~\eqref{eq:DB:orig} is solved obtaining solution $(x^{\text{LP}},y^{\text{LP}})$.
If $y^{\text{LP}}$ is integral, this solution is optimal for the current node and feasible for~\eqref{eq:DB:orig}, and the node can be pruned by optimality.
Otherwise, for each block $q\in[k]$ with noninteger $y^{\text{LP}}_q$
the \emph{branching subproblem}
\begin{subequations}\label{eq:DB:subproblem}
\begin{align}
z^*_q(x^{\text{LP}}_q,y^{\text{LP}}_q) &=& \underset{x,y}{\min} \quad & c_q^\top x_q + d_q^\top y_q\\
&& \st \quad & A_q x_q + B_q y_q \geq g_q,\\
&&& A_q^{\text{row}}x_q + B_q^{\text{row}}y_q \geq A_q^{\text{row}}x^{\text{LP}}_q + B_q^{\text{row}}y^{\text{LP}}_q, \\
&&& x_q \geq 0,\ y_q \geq 0,\ y_q \in \mathbb{Z}^{\ell_q},
\end{align}
\end{subequations}
is solved to obtain solution $(x^*_q,y^*_q)$.
Problem~\eqref{eq:DB:subproblem} arises from~\eqref{eq:DB:reform}, whereby only variables of block $q$ are considered and variables $p_q$ are fixed at $A_q^{\text{row}}x^{\text{LP}}_q + B_q^{\text{row}}y^{\text{LP}}_q$.
For each block $q\in[k]$ with integer LP solution $y^{\text{LP}}_q$
the LP solution is the optimal solution of the branching subproblem,
thus set $z^*_q(x^{\text{LP}}_q,y^{\text{LP}}_q) = c_q^\top x^{\text{LP}}_q + d_q^\top y^{\text{LP}}_q$
and $(x^*_q,y^*_q) = (x^{\text{LP}}_q,y^{\text{LP}}_q)$.
If it turns out that for every block
$z^*_q(x^{\text{LP}}_q,y^{\text{LP}}_q) = c_q^\top x^{\text{LP}}_q + d_q^\top y^{\text{LP}}_q$
holds, then it must be that
$(x^*_1,\dots,x^*_k, y^*_1,\dots,y^*_k)$
is an optimal solution for the current node of the branch-and-bound tree and the node is pruned by optimality.

If the node is not pruned, one stops at the first block $q$ for which the branching subproblem has not the same solution value as the LP solution,
that is, $z^*_q(x^{\text{LP}}_q,y^{\text{LP}}_q) > c_q^\top x^{\text{LP}}_q + d_q^\top y^{\text{LP}}_q$.
Suppose $q$ is the first such branching subproblem, the following branching rule is proposed:
\begin{equation}\label{eq:branchingrule:orig}
\begin{aligned}
&c_q^\top x_q + d_q^\top y_q \geq z^*_q(x^{\text{LP}}_q,y^{\text{LP}}_q)\\
&\vee \bigvee_{j=1}^{\ \ \ m^\text{row}} A_q^{\text{row},j}x_q + B_q^{\text{row},j}y_q < A_q^{\text{row},j}x^{\text{LP}}_q + B_q^{\text{row},j}y^{\text{LP}}_q,
\end{aligned}
\end{equation}
with $A_q^{\text{row},j}$ (or $B_q^{\text{row},j}$) the $j$th row of $A_q^{\text{row}}$ (or $B_q^{\text{row}}$).
Disjunction~\eqref{eq:branchingrule:orig} generates a multi-way branch with $m^\text{row} + 1$ child nodes that cuts off the LP solution $(x^{\text{LP}},y^{\text{LP}})$.

Since solvers usually can not handle strict inequalities,
the authors of~\cite{Yildiz_et_al:2022} suggest to make use of an $\varepsilon$-reformulation.
Thus, a small tolerance parameter $\varepsilon > 0$ is chosen,
and each strict inequality of~\eqref{eq:branchingrule:orig} gets replaced by
\begin{equation}\label{eq:epsreform}
A_q^{\text{row},j}x_q + B_q^{\text{row},j}y_q \leq A_q^{\text{row},j}x^{\text{LP}}_q + B_q^{\text{row},j}y^{\text{LP}}_q - \varepsilon.
\end{equation}
If $A_q^{\text{row},j}x_q + B_q^{\text{row},j}y_q$ is known to be integer in any feasible solution, they suggest to make in addition use of integer rounding and replace the strict inequality by $A_q^{\text{row},j}x_q + B_q^{\text{row},j}y_q \leq \lfloor A_q^{\text{row},j}x^{\text{LP}}_q + B_q^{\text{row},j}y^{\text{LP}}_q - \varepsilon \rfloor$.
In general, this rounding step is not possible, when the linking constraints~\eqref{eq:DB:orig:linking} contain continuous variables.

The special challenges of continuous or mixed-integer linking constraints and the general procedure of DB is illustrated using an example.
Consider the problem
	\begin{align}\label{eq:example:epsilon}
		\underset{x,y}{\min} \quad& y_1 + y_2\\
		\st \quad
		&\begin{pmatrix} 3 & 0\\
			-3 & 0\\
			-1 & 0 \\
			\cmidrule(rl){1-2}
			0& 1 \\
			0& -3 \\
			0& 0 \\
			0& 0 \\
			\cmidrule(rl){1-2}
			1 & 1\end{pmatrix}
			\begin{pmatrix}
				x_1\\
				x_2
			\end{pmatrix}+
		\begin{pmatrix} 1 & 0\\
			-2 & 0\\
			3 & 0\\
			\cmidrule(rl){1-2}
			0& 0 \\
			0& 1 \\
			0& 1 \\
			0& -1 \\
			\cmidrule(rl){1-2}
			0& 0
		\end{pmatrix}
		\begin{pmatrix}
		y_1\\
		y_2
		\end{pmatrix}
		\geq 
		\begin{pmatrix} 2 \\
			-3 \\
			0 \\
			\cmidrule(rl){1-1}
		    0 \\
	    	-2\\
    		0\\
    		-1\\
    		\cmidrule(rl){1-1}
    		1
    	\end{pmatrix},\qquad
    	\begin{matrix} \\
    	\text{(block 1)} \\
    	 \\
    	 \cmidrule(r){1-1}
    	\\
    	\text{(block 2)} \\
    	\\
    	\\
    	\cmidrule(r){1-1}
    	\text{(linking cons.)\hspace*{-2pt}}\nonumber
    	\end{matrix}\\
		& x \geq 0,\ y\geq 0,\ y \in \mathbb{Z}^2,\nonumber
	\end{align}
with two blocks and one linking constraint.
This example is illustrated in Figure~\ref{fig:example:epsilon}.

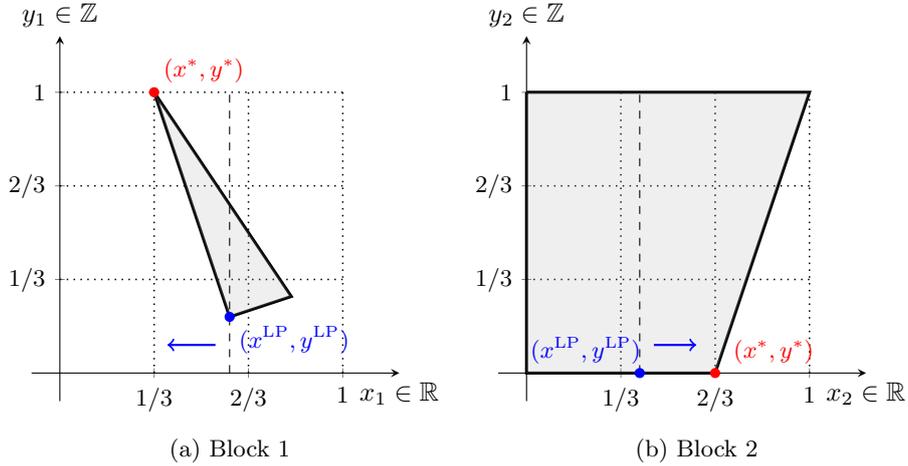
\begin{figure}[ht]
	\captionsetup[subfigure]{justification=centering}
	\captionsetup[subfigure]{format=hang}
	\definecolor{colorrootplot}{rgb}{0.12,0.46,0.70}
	\pgfplotsset{
		block plot/.style = {
			ticklabel style = {font=\small},
			unit vector ratio={1 1},
			axis x line=center,
			axis y line=center,
			xtick={1/3,2/3,1},
			ytick={1/3,2/3,1},
			xticklabels={1/3,2/3,1},
			yticklabels={1/3,2/3,1},
			xlabel style={below},
			ylabel style={above},
			xmin=-0.1, xmax=1.2,
			ymin=-0.1, ymax=1.2,
			scale=0.85
		},
	}
	\begin{subfigure}[b]{0.5\textwidth}
		\begin{tikzpicture}
		\begin{axis}[block plot,
		xlabel={$x_1\in \mathbb{R}$},
		ylabel={$y_1\in \mathbb{Z}$}
		]
		\addplot[dotted, line width = 0.2mm] (0,1/3) -- (1,1/3);
		\addplot[dotted, line width = 0.2mm] (0,2/3) -- (1,2/3);
		\addplot[dotted, line width = 0.2mm] (0,1) -- (1,1);
		\addplot[dotted, line width = 0.2mm] (1/3,0) -- (1/3,1);
		\addplot[dotted, line width = 0.2mm] (2/3,0) -- (2/3,1);
		\addplot[dotted, line width = 0.2mm] (1,0) -- (1,1);

		\addplot[line width = 0.4mm] (1/3,1) -- (9/11,3/11) -- (0.6,0.2) -- (1/3,1);
		\fill[fill=gray!60, fill opacity=0.2] (1/3,1) -- (9/11,3/11) -- (0.6,0.2) -- (1/3,1);

		\addplot[dashed] coordinates {(0.6, 0) (0.6, 1)};
		\addplot[mark=*, color=blue] coordinates {(0.6, 0.2)} node[below right]{\small $(x^{\text{LP}},y^{\text{LP}})$};
		\addplot[mark=*, color=red] coordinates {(1/3,1)} node[above right]{\small$(x^*,y^*)$};

		\draw[->, color=blue, thick] (0.55,0.1) -- (0.38,0.1);
		\end{axis}
		\end{tikzpicture}
		\caption{Block 1}
		\label{fig:example:epsilon:1}
	\end{subfigure}
	\begin{subfigure}[b]{0.5\textwidth}
		\begin{tikzpicture}
		\begin{axis}[block plot,
		xlabel={$x_2\in \mathbb{R}$},
		ylabel={$y_2\in \mathbb{Z}$}
		]
		\addplot[dotted, line width = 0.2mm] (0,1/3) -- (1,1/3);
		\addplot[dotted, line width = 0.2mm] (0,2/3) -- (1,2/3);
		\addplot[dotted, line width = 0.2mm] (0,1) -- (1,1);
		\addplot[dotted, line width = 0.2mm] (1/3,0) -- (1/3,1);
		\addplot[dotted, line width = 0.2mm] (2/3,0) -- (2/3,1);
		\addplot[dotted, line width = 0.2mm] (1,0) -- (1,1);

		\addplot[line width = 0.4mm] (0,1) -- (1,1) -- (2/3,0) -- (0,0)-- (0,1);
		\fill[fill=gray!60, fill opacity=0.2] (0,1) -- (1,1) -- (2/3,0) -- (0,0)-- (0,1);

		\addplot[dashed] coordinates {(0.4, 0) (0.4, 1)};
		\addplot[mark=*, color=blue] coordinates {(0.4, 0)} node[above left]{\small $(x^{\text{LP}},y^{\text{LP}})$\hspace*{-4pt}};
		\addplot[mark=*, color=red] coordinates {(2/3,0)} node[above right]{\ \small$(x^*,y^*)$};

		\draw[->, color=blue, thick] (0.45,0.1) -- (0.6,0.1);
		\end{axis}
		\end{tikzpicture}
		\caption{Block 2}
		\label{fig:example:epsilon:2}
	\end{subfigure}
	\caption{Example of initial Decomposition Branching.
          For each block, the corresponding polyhedron is shown as a grey shaded area with the LP solution of the first iteration in blue and the optimal solution in red.}
     	\label{fig:example:epsilon}
\end{figure}

We apply Decomposition Branching in its initial form.
By solving the LP relaxation we obtain $(x^{\text{LP}}_1,x^{\text{LP}}_2,y^{\text{LP}}_1,y^{\text{LP}}_2) = (0.6, 0.4, 0.2, 0)$.
Since $y_1^{\text{LP}} \not\in \mathbb{Z}$ we solve the branching subproblem for block 1 and detect its infeasibility.
So we branch without a child node for the objective function leading to only one child node with additional constraint $x_1 < 0.6$.

The strict inequality is reformulated with an $\varepsilon$ and the LP relaxation for the only child node is solved
obtaining solution $(x^{\text{LP}}_1,x^{\text{LP}}_2,y^{\text{LP}}_1,y^{\text{LP}}_2) = (0.6-\varepsilon, 0.4+\varepsilon, 0.2+3\varepsilon, 0)$.
These steps are repeated until $x_1\leq \frac{1}{3}$ is added to the child node.

A closer look at this approach reveals various properties.
We need $\frac{0.6-\frac{1}{3}}{\varepsilon}$ many iterations until we reach the optimal solution of $(\frac{1}{3}, \frac{2}{3}, 1, 0)$.
Even worse, if $\frac{0.6-\frac{1}{3}}{\varepsilon}$ is not integral, the optimal solution is not found.
In iteration $\lfloor \frac{0.6-\frac{1}{3}}{\varepsilon} \rfloor$ the LP solution is close to the optimal solution but not integral.
Just one iteration later, the optimal solution is cut off and DB stops with infeasibility as result due to two different reasons.
In block 1 the only optimal solution is cut off leading to the result that the original problem is classified as infeasible.
Considering only block 2, also the optimal solution is cut off, but not all feasible solutions,
leading to the result that DB could still return a feasible solution for the original problem.

The observed drawbacks of initial DB approach illustrated above are summarized in the following remark:
\begin{Remark}\label{remark:DB:epsilon}
  Two main drawbacks of an $\varepsilon$-reformulation~\eqref{eq:epsreform}
  utilized in Decomposition Branching for strict inequalities of~\eqref{eq:branchingrule:orig} can be identified:
	\begin{enumerate}[label=(\roman*)]
		\item Many iterations may be required until an optimal solution is reached, especially if tolerance parameter $\varepsilon$ is small.
		\item All optimal solution can get cut off leading to the result that the original problem is classified as infeasible or that a suboptimal solution is returned.
	\end{enumerate}
\end{Remark}

\subsection{Exactness through Exploitation of $\Delta$-Regularity}
\label{sect:decbranching:delta}

The initial Decomposition Branching algorithm can get improved by utilizing $\Delta$-regularity.
At first, one has to determine a suitable $\Delta$ for the regularity of matrix $A$ corresponding to the continuous variables in problem~\eqref{eq:MIP}.
Examples for the determination of a minimal $\Delta$-regularity are given in the previous Section~\ref{sect:misl}.
Subsequently, Decomposition Branching is applied.

Note first, that
for every feasible solution
\begin{equation}
(\bar{x},\bar{y}) \in X \coloneqq \{ (x,y) \,\vert\, Ax + By \geq g,\, x \geq 0,\, y \geq 0,\, y \in \mathbb{Z}^{\ell}\}
\end{equation}
of problem~\eqref{eq:MIP} one can construct a feasible solution
\[
(x^\Delta,\bar{y}) \in X \cap \tfrac{1}{\Delta} \mathbb{Z}^{n+\ell}
\]
with $c^\top x^\Delta + d^\top \bar{y} \leq c^\top \bar{x} + d^\top \bar{y}$.
To prove this, we proceed as follows.
We fix problem~\eqref{eq:MIP} at $\bar{y}$ and solve it.
The optimal solution $x^\Delta$ is a vertex of the underlying polyhedron (see Section~\ref{sect:lattices}) and
thus holds $x^\Delta \in \tfrac{1}{\Delta} \mathbb{Z}^{n}$.
Since $\bar{y}$ is integral, it holds that $(x^\Delta,\bar{y}) \in X \cap \tfrac{1}{\Delta} \mathbb{Z}^{n+\ell}$.
As a direct consequence of this argument, we can state the following lemma.
\begin{Lemma}\label{lemma:optsol}
	For problem~\eqref{eq:MIP} with matrix $A$ being $\Delta$-regular there exists at least one optimal solution
	$(x^{\Delta*},y^{\Delta*}) \in X \cap \tfrac{1}{\Delta} \mathbb{Z}^{n+\ell}$.
\end{Lemma}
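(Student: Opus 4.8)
The plan is to reduce to the fixed-$y$ linear program and then invoke the lattice structure from Section~\ref{sect:lattices}. By assumption, problem~\eqref{eq:MIP} has an optimal vertex solution $(x^*,y^*)$, and in particular $y^*\in\mathbb{Z}^\ell$. First I would fix $y=y^*$ and consider the linear program $\min\{c^\top x+d^\top y^* \,\vert\, Ax\ge g-By^*,\ x\ge 0\}$. Since $g$, $B$ and $y^*$ are integral, the right-hand side $b\coloneqq g-By^*$ is an integral vector, so the feasible region of this LP is exactly the polyhedron $P(A,b)$ from Section~\ref{sect:lattices}.

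Next I would check that this LP attains its optimum at a vertex of $P(A,b)$. It is feasible because $x^*\in P(A,b)$, and it is bounded below: a recession direction $r$ of $P(A,b)$ with $c^\top r<0$ would make $(x^*+tr,y^*)$ feasible for~\eqref{eq:MIP} for all $t\ge 0$ with objective tending to $-\infty$, contradicting the assumed existence of an optimal solution. Since the constraints $x\ge 0$ force $P(A,b)$ to be pointed (it contains no line), a feasible and bounded LP over it attains its optimum at a vertex, say $x^\Delta$.

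Now I would bring in $\Delta$-regularity. By \cite[Theorem~17]{Appa_Kotnyek_2004} together with the discussion in Section~\ref{sect:lattices}, $\Delta$-regularity of $A$ yields $\Lambda\subseteq\Lambda_\Delta=\tfrac1\Delta\mathbb{Z}^n$, and every vertex of $P(A,b)$ lies in $\Lambda$ independently of $b$; hence $x^\Delta\in\tfrac1\Delta\mathbb{Z}^n$. Because $y^*\in\mathbb{Z}^\ell\subseteq\tfrac1\Delta\mathbb{Z}^\ell$, we obtain $(x^\Delta,y^*)\in X\cap\tfrac1\Delta\mathbb{Z}^{n+\ell}$. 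For optimality, $x^\Delta$ minimizes $c^\top x+d^\top y^*$ over the feasible $x$, so $c^\top x^\Delta+d^\top y^*\le c^\top x^*+d^\top y^*$; combined with optimality of $(x^*,y^*)$ for~\eqref{eq:MIP} this forces equality, so setting $(x^{\Delta*},y^{\Delta*})\coloneqq(x^\Delta,y^*)$ gives the claimed optimal solution. The only mildly delicate point is the boundedness/attainment step, i.e.\ making sure the fixed-$y$ LP genuinely has a vertex optimum rather than being unbounded; the rest is bookkeeping, and indeed the paragraph preceding the lemma already carries out exactly this construction, so the lemma is essentially a restatement of it.
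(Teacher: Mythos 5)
Your proof is correct and follows essentially the same route as the paper: fix $y$ at the integral component of an optimal solution, observe that the resulting LP over $P(A,g-By^*)$ with integral right-hand side attains its optimum at a vertex, and use $\Delta$-regularity to place that vertex in $\tfrac{1}{\Delta}\mathbb{Z}^n$. Your explicit treatment of boundedness and vertex attainment merely makes precise a step the paper states informally, so there is nothing to add.
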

Note that there can be further optimal solutions that are not element of lattice $\tfrac{1}{\Delta} \mathbb{Z}^{n+\ell}$.
However, we are only interested in one optimal solution and so it is sufficient to restrict the search.

When applying Decomposition Branching, strict inequalities appear when adding a new child node for linking constraint~$j$ with branching rule~\eqref{eq:branchingrule:orig}.
We give now two statements about strengthening inequalities and converting strict inequalities into inequalities in the next lemma.
\begin{Lemma}\label{lemma:rounding:ineq}
	Given a point $(x^{\Delta},y^{\Delta}) \in  \tfrac{1}{\Delta} \mathbb{Z}^{n+\ell}$,
	two integer vectors $u \in \mathbb{Z}^{n}$ and $w \in \mathbb{Z}^{\ell}$, and scalar $\gamma \in \mathbb{Q}$.
	Then the following statements hold:
	\begin{enumerate}[label=(\roman*)]
		\item If the inequality
		\begin{equation*}
		u^\top x + w^\top y \geq \gamma
		\end{equation*}
		is valid for $(x^{\Delta},y^{\Delta})$,
		then the strengthened inequality
		\begin{equation}\label{eq:ineq-tightened}
		u^\top x + w^\top y \geq \frac{\lceil \Delta \gamma \rceil}{\Delta}
		\end{equation}
		is also valid for $(x^{\Delta},y^{\Delta})$.
		\item If the strict inequality
		\begin{equation*}
		u^\top x + w^\top y > \gamma
		\end{equation*}
		is valid for $(x^{\Delta},y^{\Delta})$,
		then the strengthened inequality
		\begin{equation}\label{eq:strict-ineq:epsilon}
		u^\top x + w^\top y \geq \frac{\lfloor \Delta \gamma \rfloor + 1}{\Delta}
		\end{equation}
		is also valid for $(x^{\Delta},y^{\Delta})$.
	\end{enumerate}
\end{Lemma}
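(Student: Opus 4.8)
The plan is to reduce both parts to a single integrality observation and then divide through by $\Delta$. Since $x^{\Delta}\in\tfrac{1}{\Delta}\mathbb{Z}^{n}$ and $u\in\mathbb{Z}^{n}$, the quantity $\Delta\,u^{\top}x^{\Delta}=u^{\top}(\Delta x^{\Delta})$ is an integer, and likewise $\Delta\,w^{\top}y^{\Delta}\in\mathbb{Z}$. Hence $z\coloneqq\Delta\bigl(u^{\top}x^{\Delta}+w^{\top}y^{\Delta}\bigr)$ is an integer, whereas $\Delta\gamma$ is merely rational. The whole lemma then amounts to comparing the integer $z$ with the rational number $\Delta\gamma$ and using that $\Delta>0$.

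For part~(i), I would multiply the hypothesis $u^{\top}x^{\Delta}+w^{\top}y^{\Delta}\ge\gamma$ by $\Delta$ to obtain $z\ge\Delta\gamma$. As $z$ is an integer lying at or above $\Delta\gamma$, it also lies at or above $\lceil\Delta\gamma\rceil$, and dividing back by $\Delta$ gives $u^{\top}x^{\Delta}+w^{\top}y^{\Delta}\ge\lceil\Delta\gamma\rceil/\Delta$, which is precisely \eqref{eq:ineq-tightened}. For part~(ii), the strict hypothesis gives $z>\Delta\gamma$; since $z$ is an integer, the inequality $z>\Delta\gamma$ is equivalent to $z\ge\lfloor\Delta\gamma\rfloor+1$ directly from the definition of the floor function (this covers both the case $\Delta\gamma\in\mathbb{Z}$, where the bound reads $\Delta\gamma+1$, and the case $\Delta\gamma\notin\mathbb{Z}$, where it reads $\lceil\Delta\gamma\rceil$). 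Dividing by $\Delta$ then yields \eqref{eq:strict-ineq:epsilon}.

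I do not expect a genuine obstacle here; the argument is essentially the derivation of \eqref{Ineq:LatticeApproach2} in Section~\ref{sect:lattices}, specialised from ``all vertices of $P(A,b)$'' to the single lattice point $(x^{\Delta},y^{\Delta})\in\tfrac{1}{\Delta}\mathbb{Z}^{n+\ell}$. The only step deserving a word of care is the strict case, where one must confirm that $\lfloor\Delta\gamma\rfloor+1$, rather than $\lceil\Delta\gamma\rceil$, is the correct tight integer lower bound even when $\Delta\gamma$ happens to be an integer.
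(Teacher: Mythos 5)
Your argument is correct and is exactly the paper's proof, merely written out in full: the paper's one-line justification is precisely your key observation that $u^\top x^{\Delta} + w^\top y^{\Delta} \in \tfrac{1}{\Delta}\mathbb{Z}$, from which the rounding in (i) and (ii) follows by the integrality of $z=\Delta(u^\top x^{\Delta}+w^\top y^{\Delta})$. Your extra care in the strict case (that $z>\Delta\gamma$ forces $z\ge\lfloor\Delta\gamma\rfloor+1$ even when $\Delta\gamma$ is integral) is a correct elaboration of the same step.
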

\begin{proof}
	Both statements (i) and (ii) follow from $u^\top x^{\Delta} + w^\top y^\Delta \in  \tfrac{1}{\Delta} \mathbb{Z}^{n+\ell}$.
\end{proof}


The $j$th strict inequality of branching rule~\eqref{eq:branchingrule:orig} can now get replaced by a rounded inequality specified by Lemma~\ref{lemma:rounding:ineq} instead of reformulating it with a small $\varepsilon>0$.
Considering the $j$th strict inequality of~\eqref{eq:branchingrule:orig}, we multiply it with $-1$ and apply~\eqref{eq:strict-ineq:epsilon} to get
\begin{equation}\label{eq:branchingrule:DB:turned}
-A_q^{\text{row},j}x_q - B_q^{\text{row},j}y_q \geq \frac{\lfloor \Delta(-A_q^{\text{row},j}x^{\text{LP}}_q - B_q^{\text{row},j}y^{\text{LP}}_q) \rfloor + 1}{\Delta}.
\end{equation}
Multiplying again with $-1$ and utilizing that $-\lceil f \rceil=\lfloor -f \rfloor$ for a number $f\in\mathbb{R}$ the original strict inequality of~\eqref{eq:branchingrule:orig} can get replaced by
\begin{equation}\label{eq:branchingrule:DB}
A_q^{\text{row},j}x_q + B_q^{\text{row},j}y_q \leq \frac{\lceil \Delta(A_q^{\text{row},j}x^{\text{LP}}_q + B_q^{\text{row},j}y^{\text{LP}}_q) \rceil - 1}{\Delta}.
\end{equation}

Also the nonstrict inequality of~\eqref{eq:branchingrule:orig} corresponding to the objective function can get strengthened by using $\Delta$.
It is sufficient if at least one optimal solution remains in the solution space and thus, utilizing \eqref{eq:ineq-tightened}, the inequality can get replaced by
\begin{equation}\label{eq:branchingrule:DB:obj}
c_q^\top x_q + d_q^\top y_q \geq \frac{\lceil \Delta( z^*_q(x^{\text{LP}}_q,y^{\text{LP}}_q)) \rceil}{\Delta}
\end{equation}
since $c$ and $d$ are integral.

Consider again the example mixed-integer linear problem~\eqref{eq:example:epsilon}.
The matrix $A\in \mathbb{Z}^{8\times 2}$ corresponding to the continuous variables
is 3-regular, thus we know that an optimal solution of this problem is in $\Lambda_3 \coloneqq  \tfrac{1}{3} \mathbb{Z}^{2+2}$.
In the first child node, we have the additional strict inequality $x_1 < 0.6$.
With Lemma~\ref{lemma:rounding:ineq} we can round down the right-hand side to
$\frac{\lceil3\cdot0.6\rceil - 1}{3} = \frac{1}{3}$.
Thus the strict inequality is replaced by $x_1 \leq \frac{1}{3}$.
The LP solution of this child node is $(\frac{1}{3}, 1)$. Since it is integer in $y$, the algorithm stops after one iteration with the optimal solution.

As one can observe, the drawbacks of Remark~\ref{remark:DB:epsilon} are prevented in the above example when utilizing $\Delta$-regularity.

\subsection{Proof of Exactness and Termination}
\label{sect:decbranching:proof}

In the following we proof that the enhanced Decomposition Branching approach utilizing $\Delta$-regularity is exact and terminates after finite many steps.
We call an algorithm \emph{exact} if it returns an optimal solution if available.
This also means, that no (feasibility) tolerances are permitted.
To achieve this, we make the following assumptions:
\begin{assumption}\label{assumption:DBproof}
	We assume that
	\begin{enumerate}[label=(\roman*)]
		\item all subproblems are solved exact,\label{assumption:DBproof:sub}
		\item all arithmetic operations are exact,\label{assumption:DBproof:exact}
		\item and the feasible set of the LP relaxation is a polytope.\label{assumption:DBproof:bounded}
	\end{enumerate}
\end{assumption}

These assumptions are not very strong, yet they are indispensable for theoretical inquiries.
We need the Assumptions \ref{assumption:DBproof} \ref{assumption:DBproof:sub} and \ref{assumption:DBproof} \ref{assumption:DBproof:exact} to obtain an exact optimal solution.
Assumption \ref{assumption:DBproof} \ref{assumption:DBproof:bounded} is needed for the termination.

Now we come to the main result of this section.
\begin{theorem}\label{Theorem:DBterminatesexact}
  Under the Assumptions~\ref{assumption:DBproof} Decomposition Branching utilizing $\Delta$-regularity applied to problem~\eqref{eq:MIP} is exact and terminates after finite many iterations.
\end{theorem}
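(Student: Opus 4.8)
The plan is to establish the two claims—exactness and finite termination—separately, with finite termination doing most of the heavy lifting. For exactness, I would argue that every inequality added at a branching step is one of three types: a rounded strict inequality of the form~\eqref{eq:branchingrule:DB}, a strengthened objective inequality~\eqref{eq:branchingrule:DB:obj}, or (implicitly) the original block constraints. By Lemma~\ref{lemma:optsol} there is an optimal solution $(x^{\Delta*},y^{\Delta*})\in X\cap\tfrac1\Delta\mathbb{Z}^{n+\ell}$. The key invariant to maintain along any root-to-leaf path is that, at each node, the disjunction~\eqref{eq:branchingrule:orig} from which the children are generated is valid for \emph{all} points of $X\cap\tfrac1\Delta\mathbb{Z}^{n+\ell}$; hence at least one child retains $(x^{\Delta*},y^{\Delta*})$ (or some other optimal lattice point). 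Then Lemma~\ref{lemma:rounding:ineq} guarantees that replacing the strict inequalities by~\eqref{eq:branchingrule:DB} and strengthening the objective cut to~\eqref{eq:branchingrule:DB:obj} does not cut off that retained lattice point, since all the relevant linear forms take values in $\tfrac1\Delta\mathbb{Z}$ on it. Thus an optimal solution is never lost, and—using Assumption~\ref{assumption:DBproof}\ref{assumption:DBproof:sub} and~\ref{assumption:DBproof}\ref{assumption:DBproof:exact} so that pruning decisions and subproblem values are computed correctly—the algorithm returns an optimal solution whenever one exists.

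For termination, I would show that the branch-and-bound tree is finite by exhibiting a measure that strictly decreases along every edge and is bounded below. The natural candidate is the volume (or a suitable integer count) of the LP-relaxation polytope at a node: by Assumption~\ref{assumption:DBproof}\ref{assumption:DBproof:bounded} this polytope is bounded, and each child adds a constraint of the form~\eqref{eq:branchingrule:DB} or~\eqref{eq:branchingrule:DB:obj} that strictly separates the current LP solution $(x^{\text{LP}},y^{\text{LP}})$. However, mere strict separation is not enough for a volume argument to terminate; the decisive point is that all added constraints have right-hand sides in $\tfrac1\Delta\mathbb{Z}$ and left-hand sides with bounded integer coefficients drawn from the finitely many linking-row forms $A_q^{\text{row},j},B_q^{\text{row},j}$ and the objective forms $c_q,d_q$. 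So I would argue that only finitely many distinct such inequalities can ever be generated inside the fixed bounding box given by Assumption~\ref{assumption:DBproof}\ref{assumption:DBproof:bounded}: each linking form $A_q^{\text{row},j}x+B_q^{\text{row},j}y$ ranges over a bounded interval on the box, so its rounded threshold $\tfrac{\lceil\Delta(\cdot)\rceil-1}{\Delta}$ takes only finitely many values, and likewise for the objective cut. Since along any path the constraints are strengthened monotonically and the LP solution is cut off each time, no inequality can repeat on a path, bounding the path length; combined with the finite branching factor $m^{\text{row}}+1$, the tree is finite.

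The main obstacle I anticipate is making the termination argument airtight: one must rule out an infinite path along which the same linking-row form is tightened infinitely often by ever-smaller amounts. This is precisely where $\Delta$-regularity is essential—without it, the $\varepsilon$-reformulation admits exactly such infinite (or nonterminating) behavior, as the worked example around~\eqref{eq:example:epsilon} shows. The clean way to close the gap is to observe that each branching constraint~\eqref{eq:branchingrule:DB} forces the integer quantity $\lceil\Delta(A_q^{\text{row},j}x_q+B_q^{\text{row},j}y_q)\rceil$ to drop by at least one relative to its value at the parent's LP solution, and this quantity is bounded below on the fixed polytope of Assumption~\ref{assumption:DBproof}\ref{assumption:DBproof:bounded}; an analogous monotone integer potential works for the objective cut~\eqref{eq:branchingrule:DB:obj}. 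Assigning to each node the vector of these finitely many bounded integer potentials and checking that it strictly decreases (lexicographically, or componentwise in the relevant coordinate) along every edge yields a well-founded order, hence finiteness. I would then combine this with the exactness invariant from the first paragraph to conclude that the algorithm halts and, upon halting, reports an optimal solution, which is the assertion of Theorem~\ref{Theorem:DBterminatesexact}.
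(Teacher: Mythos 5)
Your proposal is correct and follows essentially the same route as the paper's proof: exactness via Lemma~\ref{lemma:optsol} and Lemma~\ref{lemma:rounding:ineq} guaranteeing that an optimal point of $X\cap\tfrac{1}{\Delta}\mathbb{Z}^{n+\ell}$ survives every (rounded) branching disjunction, correctness of returned solutions via Assumptions~\ref{assumption:DBproof}~\ref{assumption:DBproof:sub} and \ref{assumption:DBproof:exact}, and termination because the right-hand sides of~\eqref{eq:branchingrule:DB} and~\eqref{eq:branchingrule:DB:obj} lie in $\tfrac{1}{\Delta}\mathbb{Z}$, are bounded on the polytope of Assumption~\ref{assumption:DBproof}~\ref{assumption:DBproof:bounded}, and strictly tighten along any path, so only finitely many distinct inequalities (hence nodes) can occur. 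Your "monotone integer potential" phrasing is just a repackaging of the paper's observation that an added inequality is satisfied by all subsequent LP solutions and therefore can never be added twice.
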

\begin{proof}
	This theorem is proven in four steps:\\
	First, no optimal solution $(x^{\Delta*},y^{\Delta*}) \in \Lambda_\Delta \coloneqq  \tfrac{1}{\Delta} \mathbb{Z}^{n+\ell}$ is cut off:
	We know that there is at least one optimal solution $(x^{\Delta*},y^{\Delta*})$ element of lattice $\Lambda_\Delta$, see Lemma~\ref{lemma:optsol}.
	Decomposition Branching with the initial branching rule~\eqref{eq:branchingrule:orig} does not cut off integer feasible solutions $(\bar{x},\bar{y}) \in X$.
	Due to Lemma~\ref{lemma:rounding:ineq} we can strengthen the inequalities of branching rule~\eqref{eq:branchingrule:orig} without cutting off points in lattice $\Lambda_\Delta$.
	So in each branching step $(x^{\Delta*},y^{\Delta*}) \in \Lambda_\Delta$ is kept in one of the child nodes.
	
	Second, the algorithm terminates after a finite number of steps:
	The number of different inequalities of~\eqref{eq:branchingrule:orig} is finite since their right-hand sides can take only finite many different values.
	That holds because the right-hand sides are element of $\frac{1}{\Delta}\mathbb{Z}$ and because of Assumption~\ref{assumption:DBproof}~\ref{assumption:DBproof:bounded}.
	Moreover, an inequality can not get added twice.
	Once an inequality is added, the LP solutions of the subsequent child nodes have to satisfy it.
	Consequently, in inequality~\eqref{eq:branchingrule:DB}, the right-hand side for any subsequent LP solution will be strictly less than the right-hand side of the currently added inequality.
	Similarly, the right-hand side in inequality~\eqref{eq:branchingrule:DB:obj} will be strictly greater than the current one.
	Thus, the number of nodes is finite and the algorithm terminates.
     
	Third, the results of the nodes are correct:
	At each node it is either branched or the node is pruned by (i) bound, (ii) infeasibility, (iii) optimality.
	The node is pruned by bound if the LP solution value is worse than the current incumbent.
	This node can not contain optimal solution $(x^{\Delta*},y^{\Delta*})$ and does not need to be considered further.
	A node is pruned by infeasibility in two cases.
	The first case occurs if the LP relaxation is infeasible.
	The second case occurs if the parent node branched on an infeasible branching subproblem.
	Then the child node corresponding to the objective function can get immediately pruned.
	A node is pruned by optimality if the optimal LP solution is integer feasible or if the optimal solution values of all branching subproblems are equal to the optimal LP solution values.
	Note that this solution is not necessarily in lattice $\Lambda_\Delta$.
	But following the explanation in Section~\ref{sect:decbranching:delta}, one can construct a solution in $\Lambda_\Delta$.
	However, since $(x^{\Delta*},y^{\Delta*})$ is never cut off, the best of all found solutions is not worse than $(x^{\Delta*},y^{\Delta*})$.

	Fourth, feasible solutions are exact:
	Due to Assumption~\ref{assumption:DBproof}~\ref{assumption:DBproof:sub} and~\ref{assumption:DBproof:exact} all found integer feasible solutions are exact, that is, feasible without tolerances.
	
	Thus, Decomposition Branching utilizing $\Delta$-regularity terminates after a finite number of steps with an exact optimal solution.
\end{proof}

Note that problem~\eqref{eq:MIP} has an optimal vertex solution per assumption.
Nevertheless, Theorem~\ref{Theorem:DBterminatesexact} still holds when problem~\eqref{eq:MIP} is infeasible.
The Algorithm terminates after finite many iterations with the same argumentation as in the proof above.
Moreover, the problem is infeasible if and only if Decomposition Branching utilizing $\Delta$-regularity terminates without any feasible solution.

\section{Computational Study}
\label{sect:results}

In this section, we present a comprehensive computational study to investigate the behavior of the initial Decomposition Branching compared to Decomposition Branching utilizing $\Delta$-regularity.

\subsection{Computational Setup}
Both algorithm variants, the initial Decomposition Branching (DB) and the enhanced version using $\Delta$-regularity ($\Delta$DB), are implemented in C++.
The source code is available at
\cite{githubrepo_decbranch}.
The difference between DB and $\Delta$DB consists solely in the handling of strict inequalities and in strengthening nonstrict inequalities.
DB uses an $\varepsilon$-reformulation where tolerance parameter $\varepsilon$ can be chosen variously but is fixed.
Whereas, our enhanced proposed version, $\Delta$DB,
rounds the right-hand side depending on $\Delta$-regularity of matrix $A$
as shown in Lemma~\ref{lemma:rounding:ineq} and equations~\eqref{eq:branchingrule:DB} and~\eqref{eq:branchingrule:DB:obj}.

We used a development version of \emph{exact SCIP}~\cite{Eifler2023} 
based on a pre-release version of SCIP~10.0
as solver for the subproblems, whose code is publicly available at ~\url{https://github.com/scipopt/scip/tree/exact-rational}.
Thereby, we used SoPlex~7.0~\cite{bolusani2024} as underlying LP solver with MPFR and GMP linked.
Exact SCIP~\cite{Eifler2023} is a substantial revision and extension of~\cite{Cook_etal_2013}.
It is a numerically exact variant of solver SCIP, and also our own code uses exact arithmetic operations.
Thus, Assumptions~\ref{assumption:DBproof}~\ref{assumption:DBproof:sub} and \ref{assumption:DBproof}~\ref{assumption:DBproof:exact} hold, and inexactness can only stem from our algorithm variants themselves.

It should be noted that our code can not compete with state-of-the-art solvers.
One possible explanation for this is the lack of additional methods, such as presolving, heuristics, cut generators, as well as the absence of sophisticated strategies for selecting the next node or branching subproblem in our code.
Furthermore, exact SCIP is approximately 8.1 times slower than the floating-point version of SCIP~\cite{Eifler2023}.

All presented computational results were generated on a compute cluster
using compute nodes with Intel Xeon Gold 6326 processors with 2.9 GHz and
32 GB RAM; see~\cite{WoodyClusterWebsite} for more details.

\subsection{Test Sets}

We consider two sets of artificially generated instances.
The first set contains multi-item single-level lot-sizing problems with joint resource constraints (\testset{MISL}), whereas the second set contains capacitated facility location problems with single-sourcing (\testset{CFL}).
Both underlying models are described in Section~\ref{sect:misl}.

The instances of test set \testset{MISL} have 2, 3, or 4 items, whereby each item determines one block of the decomposition.
The number of time periods is set to 3, 4, or 5.
For each of these 9 items-periods combinations 10 instances are generated resulting in a test set of 90 instances in total.
Moreover, the initial stock is fixed at zero to fulfill Assumption~\ref{assumption:DBproof}~\ref{assumption:DBproof:bounded}.
All remaining coefficients are randomly chosen positive integers,
whereby the coefficients $a^i$ are chosen from $\{2,3,4,5\}$, which gives a minimal $\Delta$-regularity in range $[2,60]$.

The instances of test set \testset{CFL} have 6, 12, or 18 clients and 2, 3, or 4 facilities.
For each of these 9 clients-facilities combinations 10 instances are generated resulting in a test set of 90 instances in total.
Each client specifies one block of the decomposition and one additional block is specified by all set-up variables $y_j$.
Moreover, we require single-sourcing for 50\% of the clients.
All coefficients are randomly chosen positive integers.
Thereby the coefficients $a^i$ are chosen from $\{2,3,4,5\}$, which gives a minimal $\Delta$-regularity in range $[2,60]$.

We preselected the instances such that for all 90 instances of both test sets holds:
a) all instances have an optimal solution,
b) at least one of the algorithm variants considered later finishes within the time limit.

These 180 instances inclusive the corresponding decomposition and the corresponding minimal $\Delta$-regularity are publicly available at \cite{githubrepo_decbranch}.

\subsection{Performance results}

We evaluate the behavior of both variants, $\Delta$DB and DB, on the two test sets.
For $\Delta$DB a minimal $\Delta$-regularity is determined according to Theorems~\ref{Theorem:MISL:Delta-regular} and~\ref{Theorem:CFL:Delta-regular}.
The initial DB is evaluated with four different tolerance parameters $\varepsilon$: $10^{-1}$, $10^{-2}$, $10^{-3}$, and $10^{-4}$.
So, in total we compare five variants of Decomposition Branching.
We always use a time limit of one hour to process an instance.


\begin{table}[h]
	\centering
	\setlength{\tabcolsep}{10pt}
	\begin{tabular}{lrrrrr}
		\toprule
		&            & \multicolumn{4}{c}{DB} \\
		\cmidrule(lr){3-6}
		state & $\Delta$DB & $10^{-1}$ & $10^{-2}$ & $10^{-3}$ & $10^{-4}$ \\
		\midrule
		timelimit &  38 &  13 &  52 &  60 &  71 \\
		finished\_opt &  52 &  6 &  2 &  2 &  2 \\
		finished\_nosol &  0 &  48 &  29 &  23 &  12 \\
		finished\_subopt &  0 &  23 &  7 &  5 &  5 \\
		\bottomrule
	\end{tabular}
	\caption{State of $\Delta$DB and DB with four different $\varepsilon$ for test set \testset{MISL}.}
	\label{tbl:run13:status:misl:exact}
\end{table}

\begin{table}[h]
	\centering
	\setlength{\tabcolsep}{10pt}
	\begin{tabular}{lrrrrr}
		\toprule
		&            & \multicolumn{4}{c}{DB} \\
		\cmidrule(lr){3-6}
		state & $\Delta$DB & $10^{-1}$ & $10^{-2}$ & $10^{-3}$ & $10^{-4}$ \\
		\midrule
		timelimit &  12 &  3 &  31 &  57 &  72 \\
		finished\_opt &  78 &  82 &  56 &  30 &  15 \\
		finished\_nosol &  0 &  2 &  2 &  2 &  2 \\
		finished\_subopt &  0 &  3 &  1 &  1 &  1 \\
		\bottomrule
	\end{tabular}
	\caption{State of $\Delta$DB and DB with four different $\varepsilon$ for test set \testset{CFL}.}
	\label{tbl:run13:status:cfl:exact}
\end{table}

First, we examine the state of all five variants for both test sets. 
These numbers are summarized in Table~\ref{tbl:run13:status:misl:exact} for test set \testset{MISL} and Table~\ref{tbl:run13:status:cfl:exact} for test set \testset{CFL}.
Every instance can terminate with one of four states.
If an algorithm variant reached the time limit of 1 hour, this instance is counted in row "timelimit".
Otherwise, the algorithm has finished, however, it can have three different states.
If the optimal solution was found, the instance is counted in row "finished\_opt".
If the algorithm terminated without finding any solution, it is counted in row "finished\_nosol" and is incorrectly marked as infeasible.
The last row, "finished\_subopt", counts all instances that terminated with a feasible but not optimal solution.

For \testset{MISL}, we observe that DB returns an incorrect state, either without solution or with suboptimal solution, for a considerable number of instances.
For example, with the largest $\varepsilon$ of $10^{-1}$, DB yields incorrect states for 71 of 90 instances.
The number of incorrect states decreases as $\varepsilon$ decreases.
DB finds the correct optimal solution for only 2 or 6 instances.
On the other hand, $\Delta$DB always returns a correct state.
It solves 52 instances optimally and reaches the time limit in 38 cases.

The algorithm variants applied to \testset{CFL} show a similar, albeit different, behavior.
DB finds an optimal solution for a large number of instances, however, this number decreases rapidly as $\varepsilon$ becomes smaller.
For example, with an $\varepsilon$ of $10^{-1}$, DB finds optimal solutions for 82 instances, but this number drops to 15 instances at an $\varepsilon$ of $10^{-4}$.
Some instances finished incorrectly without any solution or with suboptimal solution running DB, which also decreases with smaller $\varepsilon$.
In contrast, taking a look at $\Delta$DB, we observe that it does not yield any incorrect results.
At first glance, it seems strange that $\Delta$DB can solve fewer instances optimally than DB with $\varepsilon = 10^{-1}$.
However, this is not really surprising, as $\varepsilon = 10^{-1}$ is comparable to a $\Delta$ of $10$, and thus this variant can therefore make greater progress than $\Delta$DB with a $\Delta$ greater than $10$.
Nevertheless, $\Delta$DB does not have the risk of returning incorrect results.


\begin{figure}[th]
	\centering
	\definecolor{colorinfeasplot}{rgb}{0.3,0.4,0.75} 
	\definecolor{colortimeplot}{rgb}{0.9,0.8,0.05} 
	\definecolor{coloroptplot}{rgb}{0.3,0.8,0.0} 
	\definecolor{colorsuboptplot}{rgb}{0.75,0.0,0.10} 
	\pgfplotsset{
		nodes plot/.style = {
			tick align=inside,
			ticklabel style = {font=\footnotesize},
			xmin=3,
			xmax=8e5,
			ymin=3,
			ymax=8e5,
			unit vector ratio={1 1},
			scale=0.88,
			grid=major,
			major grid style={line width=.3pt,draw=gray, dotted}
		},
		timetime addplot/.style = {
			draw=colortimeplot,
			fill=colortimeplot,
			mark=asterisk,
			only marks,
			mark size=3.5pt,
			thick
		},
		timeinfeas addplot/.style = {
			draw=colorinfeasplot,
			fill=colorinfeasplot,
			mark=x,
			only marks,
			mark size=3.5pt,
			thick
		},
		timesubopt addplot/.style = {
			draw=colorsuboptplot,
			fill=colorsuboptplot,
			mark=+,
			only marks,
			mark size=3.5pt,
			very thick
		},
		timeopt addplot/.style = {
			draw=coloroptplot,
			fill=coloroptplot,
			mark=Mercedes star flipped,
			only marks,
			mark size=3.5pt,
			very thick
		},
		optinfeas addplot/.style = {
			draw=colorinfeasplot,
			fill=colorinfeasplot,
			mark=diamond*,
			only marks,
			fill opacity=0.3,
			mark size=3.5pt,
			thick
		},
		optsubopt addplot/.style = {
			draw=colorsuboptplot,
			fill=colorsuboptplot,
			mark=square*,
			only marks,
			fill opacity=0.3,
			mark size=3pt,
			very thick
		},
		optopt addplot/.style = {
			draw=coloroptplot,
			fill=coloroptplot,
			mark=triangle*,
			only marks,
			fill opacity=0.3,
			mark size=3.8pt,
			thick
		},
		opttime addplot/.style = {
			draw=colortimeplot,
			fill=colortimeplot,
			mark=*,
			only marks,
			fill opacity=0.3,
			mark size=3pt,
			thick
		},
		discard if not/.style 2 args={
			filter discard warning=false,
			x filter/.append code={
				\edef\tempa{\thisrow{#1}}
				\edef\tempb{#2}
				\ifx\tempa\tempb
				\else
				\def\pgfmathresult{NaN}
				\fi
			},
		},
	}
	\begin{tikzpicture}
		\begin{loglogaxis}[nodes plot,
		ylabel={\#Nodes of DB},
		xticklabels=\empty,
		xtick pos=right,
		ytick pos= left]
		\draw[line width=.2pt,draw=gray, dashed] (1,1) -- (1e7,1e7);
		\addplot[timetime addplot]
		table[x=Nodes_delta,
		y=Nodes_eps110,
		col sep=comma,
		discard if not={Detailed_status_delta}{timelimit},
		discard if not={Detailed_status_eps110}{timelimit}]
		{graphics/df_misl_plot.csv};
		\addplot[opttime addplot]
		table[x=Nodes_delta,
		y=Nodes_eps110,
		col sep=comma,
		discard if not={Detailed_status_delta}{finished_opt},
		discard if not={Detailed_status_eps110}{timelimit}]
		{graphics/df_misl_plot.csv};
		\addplot[timeinfeas addplot]
		table[x=Nodes_delta,
		y=Nodes_eps110,
		col sep=comma,
		discard if not={Detailed_status_delta}{timelimit},
		discard if not={Detailed_status_eps110}{finished_infeas}]
		{graphics/df_misl_plot.csv};
		\addplot[optinfeas addplot]
		table[x=Nodes_delta,
		y=Nodes_eps110,
		col sep=comma,
		discard if not={Detailed_status_delta}{finished_opt},
		discard if not={Detailed_status_eps110}{finished_infeas}]
		{graphics/df_misl_plot.csv};
		\addplot[optsubopt addplot]
		table[x=Nodes_delta,
		y=Nodes_eps110,
		col sep=comma,
		discard if not={Detailed_status_delta}{finished_opt},
		discard if not={Detailed_status_eps110}{finished_subopt}]
		{graphics/df_misl_plot.csv};
		\addplot[optopt addplot]
		table[x=Nodes_delta,
		y=Nodes_eps110,
		col sep=comma,
		discard if not={Detailed_status_delta}{finished_opt},
		discard if not={Detailed_status_eps110}{finished_opt}]
		{graphics/df_misl_plot.csv};
		\addplot[timeopt addplot]
		table[x=Nodes_delta,
		y=Nodes_eps110,
		col sep=comma,
		discard if not={Detailed_status_delta}{timelimit},
		discard if not={Detailed_status_eps110}{finished_opt}]
		{graphics/df_misl_plot.csv};
		\addplot[timesubopt addplot]
		table[x=Nodes_delta,
		y=Nodes_eps110,
		col sep=comma,
		discard if not={Detailed_status_delta}{timelimit},
		discard if not={Detailed_status_eps110}{finished_subopt}]
		{graphics/df_misl_plot.csv};\label{ptimesubopt}
		
		\node[draw, fill=white, anchor=south west] at (axis cs:1e4,10) {$\varepsilon=10^{-1}$};
		\end{loglogaxis}
	\end{tikzpicture}
\hspace*{0pt}
	\begin{tikzpicture}
		\begin{loglogaxis}[nodes plot,
		xticklabels=\empty,
		yticklabels=\empty,
		xtick pos=right,
		ytick pos= right]
		\draw[line width=.2pt,draw=gray, dashed] (1,1) -- (1e7,1e7);
		\addplot[timetime addplot]
		table[x=Nodes_delta,
		y=Nodes_eps1100,
		col sep=comma,
		discard if not={Detailed_status_delta}{timelimit},
		discard if not={Detailed_status_eps1100}{timelimit}]
		{graphics/df_misl_plot.csv};
		\addplot[opttime addplot]
		table[x=Nodes_delta,
		y=Nodes_eps1100,
		col sep=comma,
		discard if not={Detailed_status_delta}{finished_opt},
		discard if not={Detailed_status_eps1100}{timelimit}]
		{graphics/df_misl_plot.csv};
		\addplot[timeinfeas addplot]
		table[x=Nodes_delta,
		y=Nodes_eps1100,
		col sep=comma,
		discard if not={Detailed_status_delta}{timelimit},
		discard if not={Detailed_status_eps1100}{finished_infeas}]
		{graphics/df_misl_plot.csv};
		\addplot[optinfeas addplot]
		table[x=Nodes_delta,
		y=Nodes_eps1100,
		col sep=comma,
		discard if not={Detailed_status_delta}{finished_opt},
		discard if not={Detailed_status_eps1100}{finished_infeas}]
		{graphics/df_misl_plot.csv};
		\addplot[optsubopt addplot]
		table[x=Nodes_delta,
		y=Nodes_eps1100,
		col sep=comma,
		discard if not={Detailed_status_delta}{finished_opt},
		discard if not={Detailed_status_eps1100}{finished_subopt}]
		{graphics/df_misl_plot.csv};
		\addplot[optopt addplot]
		table[x=Nodes_delta,
		y=Nodes_eps1100,
		col sep=comma,
		discard if not={Detailed_status_delta}{finished_opt},
		discard if not={Detailed_status_eps1100}{finished_opt}]
		{graphics/df_misl_plot.csv};
		\addplot[timeopt addplot]
		table[x=Nodes_delta,
		y=Nodes_eps1100,
		col sep=comma,
		discard if not={Detailed_status_delta}{timelimit},
		discard if not={Detailed_status_eps1100}{finished_opt}]
		{graphics/df_misl_plot.csv};
		\addplot[timesubopt addplot]
		table[x=Nodes_delta,
		y=Nodes_eps1100,
		col sep=comma,
		discard if not={Detailed_status_delta}{timelimit},
		discard if not={Detailed_status_eps1100}{finished_subopt}]
		{graphics/df_misl_plot.csv};
		
		\node[draw, fill=white, anchor=south west] at (axis cs:1e4,10) {$\varepsilon=10^{-2}$};
		\end{loglogaxis}
	\end{tikzpicture}

\vspace*{3pt}

	\begin{tikzpicture}
		\begin{loglogaxis}[nodes plot,
		ylabel={\#Nodes of DB},
		xlabel={\#Nodes of $\Delta$DB},
		tick pos=left]
		\draw[line width=.2pt,draw=gray, dashed] (1,1) -- (1e7,1e7);
		\addplot[timetime addplot]
		table[x=Nodes_delta,
		y=Nodes_eps11000,
		col sep=comma,
		discard if not={Detailed_status_delta}{timelimit},
		discard if not={Detailed_status_eps11000}{timelimit}]
		{graphics/df_misl_plot.csv};
		\addplot[opttime addplot]
		table[x=Nodes_delta,
		y=Nodes_eps11000,
		col sep=comma,
		discard if not={Detailed_status_delta}{finished_opt},
		discard if not={Detailed_status_eps11000}{timelimit}]
		{graphics/df_misl_plot.csv};
		\addplot[timeinfeas addplot]
		table[x=Nodes_delta,
		y=Nodes_eps11000,
		col sep=comma,
		discard if not={Detailed_status_delta}{timelimit},
		discard if not={Detailed_status_eps11000}{finished_infeas}]
		{graphics/df_misl_plot.csv};
		\addplot[optinfeas addplot]
		table[x=Nodes_delta,
		y=Nodes_eps11000,
		col sep=comma,
		discard if not={Detailed_status_delta}{finished_opt},
		discard if not={Detailed_status_eps11000}{finished_infeas}]
		{graphics/df_misl_plot.csv};
		\addplot[optsubopt addplot]
		table[x=Nodes_delta,
		y=Nodes_eps11000,
		col sep=comma,
		discard if not={Detailed_status_delta}{finished_opt},
		discard if not={Detailed_status_eps11000}{finished_subopt}]
		{graphics/df_misl_plot.csv};
		\addplot[optopt addplot]
		table[x=Nodes_delta,
		y=Nodes_eps11000,
		col sep=comma,
		discard if not={Detailed_status_delta}{finished_opt},
		discard if not={Detailed_status_eps11000}{finished_opt}]
		{graphics/df_misl_plot.csv};
		\addplot[timeopt addplot]
		table[x=Nodes_delta,
		y=Nodes_eps11000,
		col sep=comma,
		discard if not={Detailed_status_delta}{timelimit},
		discard if not={Detailed_status_eps11000}{finished_opt}]
		{graphics/df_misl_plot.csv};
		\addplot[timesubopt addplot]
		table[x=Nodes_delta,
		y=Nodes_eps11000,
		col sep=comma,
		discard if not={Detailed_status_delta}{timelimit},
		discard if not={Detailed_status_eps11000}{finished_subopt}]
		{graphics/df_misl_plot.csv};
		
		\node[draw, fill=white, anchor=south west] at (axis cs:1e4,10) {$\varepsilon=10^{-3}$};
		\end{loglogaxis}
	\end{tikzpicture}
\hspace*{0pt}
	\begin{tikzpicture}
		\begin{loglogaxis}[nodes plot,
		yticklabels=\empty,
		xlabel={\#Nodes of $\Delta$DB},
		xtick pos=left,
		ytick pos=right]
		\draw[line width=.2pt,draw=gray, dashed] (1,1) -- (1e7,1e7);
		\addplot[timetime addplot]
		table[x=Nodes_delta,
		y=Nodes_eps110000,
		col sep=comma,
		discard if not={Detailed_status_delta}{timelimit},
		discard if not={Detailed_status_eps110000}{timelimit}]
		{graphics/df_misl_plot.csv};\label{ptimetime}
		\addplot[opttime addplot]
		table[x=Nodes_delta,
		y=Nodes_eps110000,
		col sep=comma,
		discard if not={Detailed_status_delta}{finished_opt},
		discard if not={Detailed_status_eps110000}{timelimit}]
		{graphics/df_misl_plot.csv};\label{popttime}
		\addplot[timeinfeas addplot]
		table[x=Nodes_delta,
		y=Nodes_eps110000,
		col sep=comma,
		discard if not={Detailed_status_delta}{timelimit},
		discard if not={Detailed_status_eps110000}{finished_infeas}]
		{graphics/df_misl_plot.csv};\label{ptimeinfeas}
		\addplot[optinfeas addplot]
		table[x=Nodes_delta,
		y=Nodes_eps110000,
		col sep=comma,
		discard if not={Detailed_status_delta}{finished_opt},
		discard if not={Detailed_status_eps110000}{finished_infeas}]
		{graphics/df_misl_plot.csv};\label{poptinfeas}
		\addplot[optsubopt addplot]
		table[x=Nodes_delta,
		y=Nodes_eps110000,
		col sep=comma,
		discard if not={Detailed_status_delta}{finished_opt},
		discard if not={Detailed_status_eps110000}{finished_subopt}]
		{graphics/df_misl_plot.csv};\label{poptsubopt}
		\addplot[optopt addplot]
		table[x=Nodes_delta,
		y=Nodes_eps110000,
		col sep=comma,
		discard if not={Detailed_status_delta}{finished_opt},
		discard if not={Detailed_status_eps110000}{finished_opt}]
		{graphics/df_misl_plot.csv};\label{poptopt}
		\addplot[timeopt addplot]
		table[x=Nodes_delta,
		y=Nodes_eps110000,
		col sep=comma,
		discard if not={Detailed_status_delta}{timelimit},
		discard if not={Detailed_status_eps110000}{finished_opt}]
		{graphics/df_misl_plot.csv};
		\addplot[timesubopt addplot]
		table[x=Nodes_delta,
		y=Nodes_eps110000,
		col sep=comma,
		discard if not={Detailed_status_delta}{timelimit},
		discard if not={Detailed_status_eps110000}{finished_subopt}]
		{graphics/df_misl_plot.csv};
		
		\node[draw, fill=white, anchor=south west] at (axis cs:1e4,10) {$\varepsilon=10^{-4}$};
		\end{loglogaxis}
	\end{tikzpicture}

	\hspace*{5pt}
	\begin{tikzpicture}
	\node [draw=black,fill=none, anchor=east] at (0,0)
	{\setlength{\tabcolsep}{2pt}
	\begin{tabular}{cl@{\hskip 10pt}cl}
	\ref{ptimesubopt} & $\Delta$DB timelimit, DB suboptimal & \ref{poptsubopt} & $\Delta$DB optimal, DB suboptimal \\
	\ref{ptimetime} & $\Delta$DB timelimit, DB timelimit & \ref{popttime} & $\Delta$DB optimal, DB timelimit \\
	\ref{ptimeinfeas} & $\Delta$DB timelimit, DB nosol & \ref{poptinfeas} & $\Delta$DB optimal, DB nosol \\
	& & \ref{poptopt} & $\Delta$DB optimal, DB optimal \\
	\end{tabular}};
	\end{tikzpicture}

	\caption{Number of nodes used by $\Delta$DB compared to DB with four different $\varepsilon$ for test set \testset{MISL}. Shapes define the state of the two compared variants.}
	\label{fig:nodes:misl:exact}
\end{figure}

\begin{figure}[th]
	\centering
	\definecolor{colorinfeasplot}{rgb}{0.3,0.4,0.75} 
	\definecolor{colortimeplot}{rgb}{0.9,0.8,0.05} 
	\definecolor{coloroptplot}{rgb}{0.3,0.8,0.0} 
	\definecolor{colorsuboptplot}{rgb}{0.75,0.0,0.10} 
	\pgfplotsset{
		nodes plot/.style = {
			tick align=inside,
			ticklabel style = {font=\footnotesize},
			xmin=0.4,
			xmax=9e5,
			ymin=0.4,
			ymax=9e5,
			unit vector ratio={1 1},
			scale=0.88,
			grid=major,
			major grid style={line width=.3pt,draw=gray, dotted}
		},
		timetime addplot/.style = {
			draw=colortimeplot,
			fill=colortimeplot,
			mark=asterisk,
			only marks,
			mark size=3.5pt,
			thick
		},
		timeinfeas addplot/.style = {
			draw=colorinfeasplot,
			fill=colorinfeasplot,
			mark=x,
			only marks,
			mark size=3.5pt,
			thick
		},
		timesubopt addplot/.style = {
			draw=colorsuboptplot,
			fill=colorsuboptplot,
			mark=+,
			only marks,
			mark size=3.5pt,
			very thick
		},
		timeopt addplot/.style = {
			draw=coloroptplot,
			fill=coloroptplot,
			mark=Mercedes star flipped,
			only marks,
			mark size=3.5pt,
			very thick
		},
		optinfeas addplot/.style = {
			draw=colorinfeasplot,
			fill=colorinfeasplot,
			mark=diamond*,
			only marks,
			fill opacity=0.3,
			mark size=3.5pt,
			thick
		},
		optsubopt addplot/.style = {
			draw=colorsuboptplot,
			fill=colorsuboptplot,
			mark=square*,
			only marks,
			fill opacity=0.3,
			mark size=3pt,
			very thick
		},
		optopt addplot/.style = {
			draw=coloroptplot,
			fill=coloroptplot,
			mark=triangle*,
			only marks,
			fill opacity=0.3,
			mark size=3.8pt,
			thick
		},
		opttime addplot/.style = {
			draw=colortimeplot,
			fill=colortimeplot,
			mark=*,
			only marks,
			fill opacity=0.3,
			mark size=3pt,
			thick
		},
		discard if not/.style 2 args={
			filter discard warning=false,
			x filter/.append code={
				\edef\tempa{\thisrow{#1}}
				\edef\tempb{#2}
				\ifx\tempa\tempb
				\else
				\def\pgfmathresult{NaN}
				\fi
			},
		},
	}
	\begin{tikzpicture}
		\begin{loglogaxis}[nodes plot,
		ylabel={\#Nodes of DB},
		xticklabels=\empty,
		xtick pos=right,
		ytick pos= left]
		\draw[line width=.2pt,draw=gray, dashed] (0.4,00.4) -- (1e7,1e7);
		\addplot[timetime addplot]
		table[x=Nodes_delta,
		y=Nodes_eps110,
		col sep=comma,
		discard if not={Detailed_status_delta}{timelimit},
		discard if not={Detailed_status_eps110}{timelimit}]
		{graphics/df_cfl_plot.csv};
		\addplot[opttime addplot]
		table[x=Nodes_delta,
		y=Nodes_eps110,
		col sep=comma,
		discard if not={Detailed_status_delta}{finished_opt},
		discard if not={Detailed_status_eps110}{timelimit}]
		{graphics/df_cfl_plot.csv};
		\addplot[timeinfeas addplot]
		table[x=Nodes_delta,
		y=Nodes_eps110,
		col sep=comma,
		discard if not={Detailed_status_delta}{timelimit},
		discard if not={Detailed_status_eps110}{finished_infeas}]
		{graphics/df_cfl_plot.csv};
		\addplot[optinfeas addplot]
		table[x=Nodes_delta,
		y=Nodes_eps110,
		col sep=comma,
		discard if not={Detailed_status_delta}{finished_opt},
		discard if not={Detailed_status_eps110}{finished_infeas}]
		{graphics/df_cfl_plot.csv};
		\addplot[optopt addplot]
		table[x=Nodes_delta,
		y=Nodes_eps110,
		col sep=comma,
		discard if not={Detailed_status_delta}{finished_opt},
		discard if not={Detailed_status_eps110}{finished_opt}]
		{graphics/df_cfl_plot.csv};
		\addplot[optsubopt addplot]
		table[x=Nodes_delta,
		y=Nodes_eps110,
		col sep=comma,
		discard if not={Detailed_status_delta}{finished_opt},
		discard if not={Detailed_status_eps110}{finished_subopt}]
		{graphics/df_cfl_plot.csv};
		\addplot[timeopt addplot]
		table[x=Nodes_delta,
		y=Nodes_eps110,
		col sep=comma,
		discard if not={Detailed_status_delta}{timelimit},
		discard if not={Detailed_status_eps110}{finished_opt}]
		{graphics/df_cfl_plot.csv};\label{ptimeopt}
		\addplot[timesubopt addplot]
		table[x=Nodes_delta,
		y=Nodes_eps110,
		col sep=comma,
		discard if not={Detailed_status_delta}{timelimit},
		discard if not={Detailed_status_eps110}{finished_subopt}]
		{graphics/df_cfl_plot.csv};
		
		\node[draw, fill=white, anchor=south west] at (axis cs:5e3,1) {$\varepsilon=10^{-1}$};
		\end{loglogaxis}
	\end{tikzpicture}
\hspace*{0pt}
	\begin{tikzpicture}
		\begin{loglogaxis}[nodes plot,
		xticklabels=\empty,
		yticklabels=\empty,
		xtick pos=right,
		ytick pos= right]
		\draw[line width=.2pt,draw=gray, dashed] (0.4,00.4) -- (1e7,1e7);
		\addplot[timetime addplot]
		table[x=Nodes_delta,
		y=Nodes_eps1100,
		col sep=comma,
		discard if not={Detailed_status_delta}{timelimit},
		discard if not={Detailed_status_eps1100}{timelimit}]
		{graphics/df_cfl_plot.csv};
		\addplot[opttime addplot]
		table[x=Nodes_delta,
		y=Nodes_eps1100,
		col sep=comma,
		discard if not={Detailed_status_delta}{finished_opt},
		discard if not={Detailed_status_eps1100}{timelimit}]
		{graphics/df_cfl_plot.csv};
		\addplot[timeinfeas addplot]
		table[x=Nodes_delta,
		y=Nodes_eps1100,
		col sep=comma,
		discard if not={Detailed_status_delta}{timelimit},
		discard if not={Detailed_status_eps1100}{finished_infeas}]
		{graphics/df_cfl_plot.csv};
		\addplot[optinfeas addplot]
		table[x=Nodes_delta,
		y=Nodes_eps1100,
		col sep=comma,
		discard if not={Detailed_status_delta}{finished_opt},
		discard if not={Detailed_status_eps1100}{finished_infeas}]
		{graphics/df_cfl_plot.csv};
		\addplot[optopt addplot]
		table[x=Nodes_delta,
		y=Nodes_eps1100,
		col sep=comma,
		discard if not={Detailed_status_delta}{finished_opt},
		discard if not={Detailed_status_eps1100}{finished_opt}]
		{graphics/df_cfl_plot.csv};
		\addplot[optsubopt addplot]
		table[x=Nodes_delta,
		y=Nodes_eps1100,
		col sep=comma,
		discard if not={Detailed_status_delta}{finished_opt},
		discard if not={Detailed_status_eps1100}{finished_subopt}]
		{graphics/df_cfl_plot.csv};
		\addplot[timeopt addplot]
		table[x=Nodes_delta,
		y=Nodes_eps1100,
		col sep=comma,
		discard if not={Detailed_status_delta}{timelimit},
		discard if not={Detailed_status_eps1100}{finished_opt}]
		{graphics/df_cfl_plot.csv};
		\addplot[timesubopt addplot]
		table[x=Nodes_delta,
		y=Nodes_eps1100,
		col sep=comma,
		discard if not={Detailed_status_delta}{timelimit},
		discard if not={Detailed_status_eps1100}{finished_subopt}]
		{graphics/df_cfl_plot.csv};
		
		\node[draw, fill=white, anchor=south west] at (axis cs:5e3,1) {$\varepsilon=10^{-2}$};
		\end{loglogaxis}
	\end{tikzpicture}

\vspace*{3pt}

	\begin{tikzpicture}
		\begin{loglogaxis}[nodes plot,
		ylabel={\#Nodes of DB},
		xlabel={\#Nodes of $\Delta$DB},
		tick pos=left]
		\draw[line width=.2pt,draw=gray, dashed] (0.4,00.4) -- (1e7,1e7);
		\addplot[timetime addplot]
		table[x=Nodes_delta,
		y=Nodes_eps11000,
		col sep=comma,
		discard if not={Detailed_status_delta}{timelimit},
		discard if not={Detailed_status_eps11000}{timelimit}]
		{graphics/df_cfl_plot.csv};
		\addplot[opttime addplot]
		table[x=Nodes_delta,
		y=Nodes_eps11000,
		col sep=comma,
		discard if not={Detailed_status_delta}{finished_opt},
		discard if not={Detailed_status_eps11000}{timelimit}]
		{graphics/df_cfl_plot.csv};
		\addplot[timeinfeas addplot]
		table[x=Nodes_delta,
		y=Nodes_eps11000,
		col sep=comma,
		discard if not={Detailed_status_delta}{timelimit},
		discard if not={Detailed_status_eps11000}{finished_infeas}]
		{graphics/df_cfl_plot.csv};
		\addplot[optinfeas addplot]
		table[x=Nodes_delta,
		y=Nodes_eps11000,
		col sep=comma,
		discard if not={Detailed_status_delta}{finished_opt},
		discard if not={Detailed_status_eps11000}{finished_infeas}]
		{graphics/df_cfl_plot.csv};
		\addplot[optopt addplot]
		table[x=Nodes_delta,
		y=Nodes_eps11000,
		col sep=comma,
		discard if not={Detailed_status_delta}{finished_opt},
		discard if not={Detailed_status_eps11000}{finished_opt}]
		{graphics/df_cfl_plot.csv};
		\addplot[optsubopt addplot]
		table[x=Nodes_delta,
		y=Nodes_eps11000,
		col sep=comma,
		discard if not={Detailed_status_delta}{finished_opt},
		discard if not={Detailed_status_eps11000}{finished_subopt}]
		{graphics/df_cfl_plot.csv};
		\addplot[timeopt addplot]
		table[x=Nodes_delta,
		y=Nodes_eps11000,
		col sep=comma,
		discard if not={Detailed_status_delta}{timelimit},
		discard if not={Detailed_status_eps11000}{finished_opt}]
		{graphics/df_cfl_plot.csv};
		\addplot[timesubopt addplot]
		table[x=Nodes_delta,
		y=Nodes_eps11000,
		col sep=comma,
		discard if not={Detailed_status_delta}{timelimit},
		discard if not={Detailed_status_eps11000}{finished_subopt}]
		{graphics/df_cfl_plot.csv};
		
		\node[draw, fill=white, anchor=south west] at (axis cs:5e3,1) {$\varepsilon=10^{-3}$};
		\end{loglogaxis}
	\end{tikzpicture}
\hspace*{0pt}
	\begin{tikzpicture}
		\begin{loglogaxis}[nodes plot,
		yticklabels=\empty,
		xlabel={\#Nodes of $\Delta$DB},
		xtick pos=left,
		ytick pos=right]
		\draw[line width=.2pt,draw=gray, dashed] (0.4,00.4) -- (1e7,1e7);
		\addplot[timetime addplot]
		table[x=Nodes_delta,
		y=Nodes_eps110000,
		col sep=comma,
		discard if not={Detailed_status_delta}{timelimit},
		discard if not={Detailed_status_eps110000}{timelimit}]
		{graphics/df_cfl_plot.csv};
		\addplot[opttime addplot]
		table[x=Nodes_delta,
		y=Nodes_eps110000,
		col sep=comma,
		discard if not={Detailed_status_delta}{finished_opt},
		discard if not={Detailed_status_eps110000}{timelimit}]
		{graphics/df_cfl_plot.csv};
		\addplot[timeinfeas addplot]
		table[x=Nodes_delta,
		y=Nodes_eps110000,
		col sep=comma,
		discard if not={Detailed_status_delta}{timelimit},
		discard if not={Detailed_status_eps110000}{finished_infeas}]
		{graphics/df_cfl_plot.csv};
		\addplot[optinfeas addplot]
		table[x=Nodes_delta,
		y=Nodes_eps110000,
		col sep=comma,
		discard if not={Detailed_status_delta}{finished_opt},
		discard if not={Detailed_status_eps110000}{finished_infeas}]
		{graphics/df_cfl_plot.csv};
		\addplot[optopt addplot]
		table[x=Nodes_delta,
		y=Nodes_eps110000,
		col sep=comma,
		discard if not={Detailed_status_delta}{finished_opt},
		discard if not={Detailed_status_eps110000}{finished_opt}]
		{graphics/df_cfl_plot.csv};
		\addplot[optsubopt addplot]
		table[x=Nodes_delta,
		y=Nodes_eps110000,
		col sep=comma,
		discard if not={Detailed_status_delta}{finished_opt},
		discard if not={Detailed_status_eps110000}{finished_subopt}]
		{graphics/df_cfl_plot.csv};
		\addplot[timeopt addplot]
		table[x=Nodes_delta,
		y=Nodes_eps110000,
		col sep=comma,
		discard if not={Detailed_status_delta}{timelimit},
		discard if not={Detailed_status_eps110000}{finished_opt}]
		{graphics/df_cfl_plot.csv};
		\addplot[timesubopt addplot]
		table[x=Nodes_delta,
		y=Nodes_eps110000,
		col sep=comma,
		discard if not={Detailed_status_delta}{timelimit},
		discard if not={Detailed_status_eps110000}{finished_subopt}]
		{graphics/df_cfl_plot.csv};

		\node[draw, fill=white, anchor=south west] at (axis cs:5e3,1) {$\varepsilon=10^{-4}$};
		\end{loglogaxis}
	\end{tikzpicture}

	\hspace*{5pt}
	\begin{tikzpicture}
	\node [draw=black,fill=none, anchor=east] at (0,0)
	{\setlength{\tabcolsep}{2pt}
	\begin{tabular}{cl@{\hskip 10pt}cl}
	\ref{ptimesubopt} & $\Delta$DB timelimit, DB suboptimal & \ref{poptsubopt} & $\Delta$DB optimal, DB suboptimal \\
	\ref{ptimetime} & $\Delta$DB timelimit, DB timelimit & \ref{popttime} & $\Delta$DB optimal, DB timelimit \\
	\ref{ptimeopt} & $\Delta$DB timelimit, DB optimal & \ref{poptopt} & $\Delta$DB optimal, DB optimal \\
	& & \ref{poptinfeas} & $\Delta$DB optimal, DB nosol  \\
	\end{tabular}};
\end{tikzpicture}

\caption{Number of nodes used by $\Delta$DB compared to DB with four different $\varepsilon$ for test set \testset{CFL}. Shapes define the state of the two compared variants.}
\label{fig:nodes:cfl:exact}
\end{figure}

Figure~\ref{fig:nodes:misl:exact} and~\ref{fig:nodes:cfl:exact} visualize the correlation between the number of nodes used and the state.
On the horizontal axis the number of nodes used by $\Delta$DB is plotted and on the vertical axis the number of nodes used by one of the four variants of DB is plotted.
Thereby, the shapes specify the states of the algorithms.
For \testset{MISL} one can observe that $\Delta$DB needs considerably fewer nodes in most cases except when DB returns an incorrect result.
Moreover the number of nodes increases for DB when decreasing $\varepsilon$.
A similar effect can get observed for \testset{CFL}.
However, for \testset{CFL} the number of nodes used by $\Delta$DB is not necessarily smaller compared to the variant with $\varepsilon=10^{-1}$.
It is worth noting that the average cost for a single node is comparable across all variants, as the algorithms differ only in the way they reformulate strict inequalities.

In summary, the incorporation of $\Delta$-regularity in Decomposition Branching offers substantial improvements over the initial approach.
The theoretical drawbacks of the initial Decomposition Branching algorithm as stated in Remark~\ref{remark:DB:epsilon} were empirically validated with these experiments.
$\Delta$DB consistently returns correct results, whereas DB can return without any solution or with a suboptimal solution although the algorithm finished within the time limit.
Moreover, $\Delta$DB has usually smaller branching trees, that is, number of nodes, especially when comparing to variants with small $\varepsilon$.
Consequently, the utilization of $\Delta$-regularity in Decomposition Branching is highly recommended from both theoretical and practical points of view, and provides reliable results with usually less effort.

\section{Conclusion}
\label{sect:conclusion}

In this paper we present a rounding procedure for strict inequalities and apply it to the Decomposition Branching approach.
The procedure is based on lattices containing at least one optimal solution,
whereby we restrict ourselves to lattices determined by the $\Delta$-regularity of the constraint matrix $A$ corresponding to the continuous variables.
Our enhanced version of Decomposition Branching guaranties to return an exact optimal solution, whereas the initial variant risks to cut off all optimal solutions.
These theoretical findings are underpinned by a computational study.

The performance of the enhanced version of decomposition branching is considerably better than the initial Decomposition Branching,
but unfortunately neither the initial nor the extended version can keep up with state-of-the-art solvers for the MISL and CFL instances at hand.
Nevertheless, we believe that the enhanced version of Decomposition Branching can be a powerful algorithm if the model is chosen appropriately and if
for the initial Decomposition Branching an appropriate and efficient implementation is available.

Our rounding procedure can be applied to all approaches that use strict inequalities for mixed-integer linear problems with known (minimal) $\Delta$-regularity.
For example, it would be particularly interesting to investigate the extent to which $\Delta$-regularity can be beneficial inside variants
of Benders decomposition as proposed in \cite[Section~5.4]{Weninger2016} and cutting plane methods as projected Chv\'{a}tal-Gomory cuts~\cite{Bonami2008}.

\bibliographystyle{abbrvnat}
\bibliography{bibliography}

\begin{thebibliography}{30}
\providecommand{\natexlab}[1]{#1}
\providecommand{\url}[1]{\texttt{#1}}
\expandafter\ifx\csname urlstyle\endcsname\relax
  \providecommand{\doi}[1]{doi: #1}\else
  \providecommand{\doi}{doi: \begingroup \urlstyle{rm}\Url}\fi

\bibitem[Appa and Kotnyek(2004)]{Appa_Kotnyek_2004}
G.~Appa and B.~Kotnyek.
\newblock Rational and integral k-regular matrices.
\newblock \emph{Discrete Mathematics}, 275\penalty0 (1):\penalty0 1--15, 2004.
\newblock ISSN 0012-365X.
\newblock \doi{https://doi.org/10.1016/S0012-365X(03)00095-5}.
\newblock URL
  \url{https://www.sciencedirect.com/science/article/pii/S0012365X03000955}.

\bibitem[Artmann et~al.(2017)Artmann, Weismantel, and
  Zenklusen]{Artmann_etal_2017}
S.~Artmann, R.~Weismantel, and R.~Zenklusen.
\newblock A strongly polynomial algorithm for bimodular integer linear
  programming.
\newblock In \emph{Proceedings of the 49th Annual ACM SIGACT Symposium on
  Theory of Computing}, STOC 2017, page 1206–1219, New York, NY, USA, 2017.
  Association for Computing Machinery.
\newblock ISBN 9781450345286.
\newblock \doi{https://doi.org/10.1145/3055399.3055473}.

\bibitem[Berthold(2014)]{Berthold2014}
T.~Berthold.
\newblock \emph{Heuristic algorithms in global MINLP solvers}.
\newblock PhD thesis, Technische Universität Berlin, 2014.
\newblock URL \url{http://www.zib.de/berthold/Berthold2014.pdf}.

\bibitem[Bodur et~al.(2022)Bodur, Ahmed, Boland, and Nemhauser]{BodurABN22}
M.~Bodur, S.~Ahmed, N.~Boland, and G.~L. Nemhauser.
\newblock Decomposition of loosely coupled integer programs: a multiobjective
  perspective.
\newblock \emph{Math. Program.}, 196\penalty0 (1):\penalty0 427--477, 2022.
\newblock \doi{https://doi.org/10.1007/S10107-021-01765-5}.

\bibitem[Bolusani et~al.(2024)Bolusani, Besançon, Bestuzheva, Chmiela,
  Dionísio, Donkiewicz, van Doornmalen, Eifler, Ghannam, Gleixner, Graczyk,
  Halbig, Hedtke, Hoen, Hojny, van~der Hulst, Kamp, Koch, Kofler, Lentz, Manns,
  Mexi, Mühmer, Pfetsch, Schlösser, Serrano, Shinano, Turner, Vigerske,
  Weninger, and Xu]{bolusani2024}
S.~Bolusani, M.~Besançon, K.~Bestuzheva, A.~Chmiela, J.~Dionísio,
  T.~Donkiewicz, J.~van Doornmalen, L.~Eifler, M.~Ghannam, A.~Gleixner,
  C.~Graczyk, K.~Halbig, I.~Hedtke, A.~Hoen, C.~Hojny, R.~van~der Hulst,
  D.~Kamp, T.~Koch, K.~Kofler, J.~Lentz, J.~Manns, G.~Mexi, E.~Mühmer, M.~E.
  Pfetsch, F.~Schlösser, F.~Serrano, Y.~Shinano, M.~Turner, S.~Vigerske,
  D.~Weninger, and L.~Xu.
\newblock {The SCIP Optimization Suite 9.0}.
\newblock Technical report, Optimization Online, Feb 2024.
\newblock URL \url{https://optimization-online.org/?p=25734}.

\bibitem[Bonami et~al.(2008)Bonami, Cornu{\'e}jols, Dash, Fischetti, and
  Lodi]{Bonami2008}
P.~Bonami, G.~Cornu{\'e}jols, S.~Dash, M.~Fischetti, and A.~Lodi.
\newblock Projected chv{\'a}tal--gomory cuts for mixed integer linear programs.
\newblock \emph{Mathematical Programming}, 113\penalty0 (2):\penalty0 241--257,
  Jun 2008.
\newblock ISSN 1436-4646.
\newblock \doi{https://doi.org/10.1007/s10107-006-0051-y}.

\bibitem[B{\"{u}}sing et~al.(2022)B{\"{u}}sing, Gersing, and
  Wrede]{Buesing_etal_2022}
C.~B{\"{u}}sing, T.~Gersing, and S.~Wrede.
\newblock Analysing the complexity of facility location problems with
  capacities, revenues, and closest assignments.
\newblock In C.~B{\"{u}}sing and A.~M. C.~A. Koster, editors, \emph{Proceedings
  of the 10th International Network Optimization Conference, {INOC} 2022,
  Aachen, Germany, June 7-10, 2022}, pages 1--6. OpenProceedings.org, 2022.
\newblock \doi{https://doi.org/10.48786/INOC.2022.15}.

\bibitem[Cook et~al.(2013)Cook, Koch, Steffy, and Wolter]{Cook_etal_2013}
W.~Cook, T.~Koch, D.~E. Steffy, and K.~Wolter.
\newblock A hybrid branch-and-bound approach for exact rational mixed-integer
  programming.
\newblock \emph{Mathematical Programming Computation}, 5\penalty0 (3):\penalty0
  305--344, Sep 2013.
\newblock ISSN 1867-2957.
\newblock \doi{https://doi.org/10.1007/s12532-013-0055-6}.

\bibitem[Eifler and Gleixner(2023)]{Eifler2023}
L.~Eifler and A.~Gleixner.
\newblock A computational status update for exact rational mixed integer
  programming.
\newblock \emph{Mathematical Programming}, 197\penalty0 (2):\penalty0 793--812,
  Feb 2023.
\newblock ISSN 1436-4646.
\newblock \doi{https://doi.org/10.1007/s10107-021-01749-5}.

\bibitem[{Erlangen National High Performance Computing Center (NHR@FAU) }(last
  accessed on 2024-10-17)]{WoodyClusterWebsite}
{Erlangen National High Performance Computing Center (NHR@FAU) }.
\newblock {Woody throughput cluster (Tier3)}, last accessed on 2024-10-17.
\newblock URL
  \url{https://hpc.fau.de/systems-services/documentation-instructions/clusters/woody-cluster/}.

\bibitem[Fiorini et~al.(2021)Fiorini, Joret, Weltge, and
  Yuditsky]{Fiorini_etal_2021}
S.~Fiorini, G.~Joret, S.~Weltge, and Y.~Yuditsky.
\newblock Integer programs with bounded subdeterminants and two nonzeros per
  row.
\newblock In \emph{62nd {IEEE} Annual Symposium on Foundations of Computer
  Science, {FOCS} 2021, Denver, CO, USA, February 7-10, 2022}, pages 13--24.
  {IEEE}, 2021.
\newblock \doi{https://doi.org/10.1109/FOCS52979.2021.00011}.

\bibitem[Fischetti and Monaci(2014)]{FisMon2014}
M.~Fischetti and M.~Monaci.
\newblock Proximity search for 0-1 mixed-integer convex programming.
\newblock \emph{Journal of Heuristics}, 20\penalty0 (6):\penalty0 709--731, Dec
  2014.
\newblock \doi{https://doi.org/10.1007/s10732-014-9266-x}.

\bibitem[Glanzer et~al.(2022)Glanzer, Stallknecht, and Weismantel]{Glanzer2022}
C.~Glanzer, I.~Stallknecht, and R.~Weismantel.
\newblock Notes on {\{}a,b,c{\}}-modular matrices.
\newblock \emph{Vietnam Journal of Mathematics}, 50\penalty0 (2):\penalty0
  469--485, Apr 2022.
\newblock ISSN 2305-2228.
\newblock \doi{https://doi.org/10.1007/s10013-021-00520-9}.

\bibitem[Halbig et~al.(2024)Halbig, Göß, and Weninger]{Halbig_DecHeur_2023}
K.~Halbig, A.~Göß, and D.~Weninger.
\newblock Exploiting user-supplied decompositions inside heuristics.
\newblock Technical report, Optimization Online, May 2024.
\newblock URL \url{https://optimization-online.org/?p=23386}.

\bibitem[Horn and Johnson(1990)]{Horn_Johnson_1990}
R.~A. Horn and C.~R. Johnson.
\newblock \emph{Matrix Analysis}.
\newblock Cambridge University Press, 1990.
\newblock ISBN 0521386322.

\bibitem[{K. Halbig}(2024)]{githubrepo_decbranch}
{K. Halbig}.
\newblock {Exact Decomposition Branching}, Oct 2024.
\newblock URL \url{https://github.com/khalbig/exact-decomposition-branching}.
\newblock [dataset, software].

\bibitem[Kotnyek(2002)]{Kotnyek2002}
B.~Kotnyek.
\newblock \emph{A generalization of totally unimodular and network matrices}.
\newblock PhD thesis, London School of Economics and Political Science, 2002.
\newblock URL \url{https://etheses.lse.ac.uk/1695/1/U174355.pdf}.

\bibitem[Land and Doig(1960)]{LandDoig:BranchAndBound}
A.~H. Land and A.~G. Doig.
\newblock An automatic method of solving discrete programming problems.
\newblock \emph{Econometrica}, 28\penalty0 (3):\penalty0 pp. 497--520, 1960.
\newblock ISSN 00129682.

\bibitem[Lov\'asz(1989)]{MR1114315}
L.~Lov\'asz.
\newblock Geometry of numbers and integer programming.
\newblock In \emph{Mathematical programming ({T}okyo, 1988)}, volume~6 of
  \emph{Math. Appl. (Japanese Ser.)}, pages 177--201. SCIPRESS, Tokyo, 1989.
\newblock ISBN 0-7923-0490-X.

\bibitem[Musitelli(2010)]{Musitelli_2010}
A.~Musitelli.
\newblock Recognizing binet matrices.
\newblock \emph{Math. Program.}, 124\penalty0 (1–2):\penalty0 349–381, Jul
  2010.
\newblock ISSN 0025-5610.

\bibitem[Pochet and Wolsey(2006)]{Pochet2006}
Y.~Pochet and L.~A. Wolsey.
\newblock \emph{Production Planning by Mixed Integer Programming}.
\newblock Springer New York, 2006.
\newblock ISBN 978-0-387-33477-6.
\newblock \doi{https://doi.org/10.1007/0-387-33477-7}.

\bibitem[Schrijver(1986)]{Schrijver1986}
A.~Schrijver.
\newblock \emph{{Theory of Linear and Integer programming}}.
\newblock Wiley-Interscience, 1986.

\bibitem[Seymour(1980)]{Seymour1980}
P.~Seymour.
\newblock Decomposition of regular matroids.
\newblock \emph{Journal of Combinatorial Theory, Series B}, 28\penalty0
  (3):\penalty0 305--359, 1980.
\newblock ISSN 0095-8956.
\newblock \doi{https://doi.org/10.1016/0095-8956(80)90075-1}.
\newblock URL
  \url{https://www.sciencedirect.com/science/article/pii/0095895680900751}.

\bibitem[Truemper(1990)]{Truemper1990}
K.~Truemper.
\newblock {A decomposition theory for matroids. V. Testing of matrix total
  unimodularity}.
\newblock \emph{J. Comb. Theory, Ser. {B}}, 49\penalty0 (2):\penalty0 241--281,
  1990.
\newblock \doi{https://doi.org/10.1016/0095-8956(90)90030-4}.

\bibitem[Walter and Truemper(2013)]{WalterTruemper2011}
M.~Walter and K.~Truemper.
\newblock Implementation of a unimodularity test.
\newblock \emph{Mathematical Programming Computation}, 5\penalty0 (1):\penalty0
  57--73, Mar 2013.
\newblock \doi{https://doi.org/10.1007/s12532-012-0048-x}.

\bibitem[Weninger(2016)]{Weninger2016}
D.~Weninger.
\newblock \emph{{Solving} mixed-integer programs arising in production
  planning}.
\newblock PhD thesis, Friedrich-Alexander-Universität Erlangen-Nürnberg,
  2016.
\newblock URL
  \url{https://opus4.kobv.de/opus4-fau/frontdoor/index/index/docId/8226}.

\bibitem[Weninger and Wolsey(2023)]{WeningerWolsey2023}
D.~Weninger and L.~A. Wolsey.
\newblock Benders-type branch-and-cut algorithms for capacitated facility
  location with single-sourcing.
\newblock \emph{European Journal of Operational Research}, 2023.
\newblock ISSN 0377-2217.
\newblock \doi{https://doi.org/10.1016/j.ejor.2023.02.042}.
\newblock URL
  \url{https://www.sciencedirect.com/science/article/pii/S0377221723001935}.

\bibitem[Williams(2013)]{williams2013model}
H.~P. Williams.
\newblock \emph{Model building in mathematical programming}.
\newblock John Wiley and Sons, 2013.
\newblock ISBN 9781118443330.

\bibitem[Wolsey(2020)]{Wolsey2020}
L.~A. Wolsey.
\newblock \emph{Integer Programming}.
\newblock John Wiley and Sons, Ltd, 2020.
\newblock ISBN 9781119606475.
\newblock \doi{https://doi.org/10.1002/9781119606475}.

\bibitem[Y\i{}ld\i{}z et~al.(2022)Y\i{}ld\i{}z, Boland, and
  Savelsbergh]{Yildiz_et_al:2022}
B.~Y\i{}ld\i{}z, N.~Boland, and M.~Savelsbergh.
\newblock Decomposition branching for mixed integer programming.
\newblock \emph{Operations Research}, 70\penalty0 (3):\penalty0 1854--1872,
  2022.
\newblock \doi{https://doi.org/10.1287/opre.2021.2210}.

\end{thebibliography}

\end{document}